\newtheorem{theorem}{Theorem}[section]
\newtheorem{lemma}[theorem]{Lemma}
\theoremstyle{definition}
\newtheorem{remark}[theorem]{Remark}
\renewcommand{\phi}{\varphi}
\theoremstyle{definition}
\renewcommand{\rho}{\varrho}
\newcommand{\OO}{{\rm O}}
\newcommand{\R}{\mathbb{R}}
\newcommand{\tr}{\mathrm{tr}}
\newcommand{\GL}{\mathrm{GL}}
\newcommand{\sym}{\mathrm{sym}}
\newcommand{\skw}{\mathrm{skew}}
\newcommand{\id}{{\boldsymbol{\mathbbm{1}}}}
\newcommand*\dif{\mathop{}\!\mathrm{d}}
\newcommand{\norm}[1]{\lVert #1 \rVert}
\def\dvg{\textnormal{Div}}
\def\H{\mathbb{H}}
\def\C{\mathbb{C}}
\def\dd{\displaystyle}
\def\sk{\textnormal{skew}}
\DeclareMathOperator{\ZJ}{ZJ}
\DeclareMathOperator{\diag}{diag}
\DeclareMathOperator{\Sym}{Sym}
\DeclareMathOperator{\bfsym}{\textbf{sym}}
\DeclareMathOperator{\iso}{iso}
\DeclareMathOperator{\Cof}{Cof}
\DeclareMathOperator{\GN}{GN}
\DeclareMathOperator{\TR}{TR}
\DeclareMathOperator{\CR}{CR}
\DeclareMathOperator{\Old}{Old}
\DeclareMathOperator{\dev}{dev}
\newcommand{\DD}{\mathrm{D}}
\newcommand{\WW}{\mathrm{W}}
\DeclareMathOperator{\Biot}{Biot}
\newlength{\dhatheight}
\newcommand{\doublehat}[1]{%
	\settoheight{\dhatheight}{\ensuremath{\widehat{#1}}}%
	\addtolength{\dhatheight}{-0.35ex}%
	\widehat{\vphantom{\rule{1pt}{\dhatheight}}%
		\smash{\widehat{#1}}}}
\newcommand{\notiff}{%
	\mathrel{{\ooalign{\hidewidth$\not\phantom{"}$\hidewidth\cr$\iff$}}}}
\providecommand{\mathbbs}[1]{{\scriptstyle{\mathbb{#1}}}}
\providecommand{\otimesdown}    {\mspace{4mu} \underline{\otimes} \mspace{4mu}}
\providecommand{\otimesup}      {\mspace{4mu} \overline{\otimes}  \mspace{4mu}}
\providecommand{\otimesdownup}  {\mspace{4mu} \overline{\underline{\otimes}} \mspace{4mu}}
\providecommand{\dc}{:}
\providecommand{\ldot}{\mspace{1mu} . \mspace{1mu}}
\definecolor{green}{rgb}{0.0, 0.5, 0.0}
\title{Major symmetry of the induced tangent stiffness tensor for the Zaremba-Jaumann rate and Kirchhoff stress in hyperelasticity: two different approaches}
\author{
Salvatore Federico\thanks{
Salvatore Federico, The University of Calgary, Department of Mechanical and Manufacturing Engineering, email: salvatore.federico@ucalcary.ca (corresponding author)
}, \qquad
Sebastian Holthausen\thanks{
Sebastian Holthausen, University of Duisburg-Essen, Chair for Nonlinear Analysis and Modelling,  Faculty of Mathematics, Thea-Leymann-Stra{\ss}e 9,
D-45127 Essen, Germany, email: sebastian.holthausen@uni-due.de
}, \qquad
Nina J. Husemann\thanks{Nina J. Husemann, University of Duisburg-Essen, Chair for Nonlinear Analysis and Modelling,  Faculty of Mathematics, Thea-Leymann-Stra{\ss}e 9,
	D-45127 Essen, Germany, email: nina.husemann@stud.uni-due.de}
\\[0.8em]
and \qquad
Patrizio Neff\thanks{
Patrizio Neff, University of Duisburg-Essen, Head of Chair for Nonlinear Analysis and Modelling, Faculty of Mathematics, Thea-Leymann-Stra{\ss}e 9,
D-45127 Essen, Germany, email: patrizio.neff@uni-due.de (corresponding author)}
}
\begin{document}
\maketitle
\vspace{-0,6cm}
\begin{abstract}
We recall in this note that the induced tangent stiffness tensor $\H^{\ZJ}_{\tau}(\tau)$ appearing in a hypoelastic formulation based on the Zaremba-Jaumann corotational derivative and the rate constitutive equation for the Kirchhoff stress tensor $\tau$ is minor and major symmetric if the Kirchhoff stress $\tau$ is derived from an elastic potential $\WW(F)$. This result is vaguely known in the literature. Here, we expose two different notational approaches which highlight the full symmetry of the tangent stiffness tensor $\H^{\ZJ}_{\tau}(\tau)$. The first approach is based on the direct use of the definition of each symmetry (minor and major), i.e., via contractions of the tensor with the deformation rate tensor $D$. The second approach aims at finding an absolute expression of the tensor $\H^{\ZJ}_{\tau}(\tau)$, by means of special tensor products and their symmetrisations. In some past works, the major symmetry of $\H^{\ZJ}_{\tau}(\tau)$ has been missed because not all necessary symmetrisations were applied.
\\
The analogous tangent stiffness tensor $\H^{\ZJ}(\sigma)$, relating the Cauchy stress tensor $\sigma$ to the Zaremba-Jaumann corotational derivative is also obtained, with both methods used for $\H^{\ZJ}_{\tau}(\tau)$.
\\
The approach is exemplified for the isotropic Hencky energy. Corresponding stability checks of software packages are shortly discussed.

\medskip

\noindent
\textbf{Keywords:} nonlinear elasticity, hyperelasticity, rate-formulation, Eulerian setting, hypo-elasticity, Cauchy-elasticity, corotational derivatives, major symmetry\\
\\
Mathscinet classification
74B20 (Nonlinear elasticity)

\bigskip

\end{abstract}
\clearpage
\tableofcontents
\section{Introduction}
\subsection{Recap on rate-formulations for nonlinear elasticity}
Nonlinear elasticity is today an integral part of continuum mechanical modelling. The basic framework is definitely established, see e.g. \cite{doyle1956,Green1968a,Wang1973a,Washizu1975a,Eringen1980a,Ogden83,ciarlet2022,Marsden1983a,Antman1995a}. Here, we are interested in a question occurring when implementing a FEM-code for nonlinear elasticity.\\
While the overwhelming current trend is to use a totally Lagrangean approach, providing the direct discretization of the equilibrium equation
\begin{equation}
	\label{eq:intr_equilibirum_equation}
	\dvg \, S_1(F) = f
\end{equation}
on the reference configuration $\Omega \subset \R^3$ where $\phi : \Omega \subset \R^3 \to \R^3$ is the deformation, $F=\DD \phi \in \GL^+ (3)$ is the deformation gradient (the Fréchet-derivative of $\phi$) and $S_1(F)$ is the (non-symmetric) first Piola-Kirchhoff stress, many established software packages like Abaqus\textsuperscript{\texttrademark}, Ansys\textsuperscript{\texttrademark}\footnote{material stability check of Ansys\textsuperscript{\texttrademark}, release 18.2 (cf.\cite{Ansys_Release,Ansys_help}):\\
"Stability checks are provided for the Mooney-Rivlin hyperelastic materials. A nonlinear material is stable if the secondary work required for an arbitrary change in the deformation is always positive. Mathematically, this is equivalent to:
	\begin{equation}
		\label{eq:dsigmadepsilon}
		\sum \dif \sigma_{ij} \, \dif \varepsilon_{ij} > 0
	\end{equation}
	where \quad $\dif \sigma \, =$ change in the Cauchy stress tensor corresponding to a change $[\dif \varepsilon]$ in the logarithmic strain.\\
	Since the change in stress is related to the change in strain through the material stiffness tensor, checking for stability of a material can be more conveniently accomplished by
	checking for the positive definiteness of the material stiffness.\\
	The material stability checks are done at the end of preprocessing but before an analysis actually begins. At that time, the program checks for the loss of stability for six typical stress paths (uniaxial tension and compression, equibiaxial tension and compression, and planar tension and compression). The range of the stretch ratio over which the stability is checked is chosen from 0.1 to 10. If the material is stable over the range then no message will appear. Otherwise, a warning message appears that lists the Mooney-Rivlin constants and the critical values of the nominal strains where the material first becomes unstable."

	\begin{remark}
For non-homogeneous response, $\langle \dif \sigma ,  \dif \varepsilon \rangle$ is only the second-order work in linear elasticity (cf.~\cite[Appendix]{CSP2024}). Also we surmise that in \eqref{eq:dsigmadepsilon} it is really treated only the incompressible case, so that also Ansys\textsuperscript{\texttrademark} is checking, as Abaqus\textsuperscript{\texttrademark} does, Hill's inequality $\langle \dif \tau \, , \, \dif \varepsilon_{\log} \rangle > 0$ in the compressible case.\\
	\end{remark}
}
, LS-DYNA etc. discretise a rate-type formulation on the current configuration $\phi(\Omega)$ and the constitutive law is accordingly an expression relating increments of spatial stress to increments of spatial strain.
For example, Abaqus\textsuperscript{\texttrademark} in its UMAT subroutine requires the rate-formulation for the Kirchhoff stress $\tau$ expressed with the Zaremba-Jaumann rate.\\
Here, it has to be remarked that any hyper- or Cauchy-elastic law on the reference configuration can be equivalently expressed in a rate-type or hypoelastic format. More precisely, a hypo-elastic material, in the sense of Truesdell \cite{truesdellremarks} and Noll \cite{Noll55}, obeys a constitutive law of the form
	\begin{align}
	\label{eqthedoublestr}
	\frac{\DD^{\sharp}}{\DD t}[\sigma] = \H^{\sharp}(\sigma) . D \quad \iff \quad \underbrace{D}_{\text{the stretching}} = [\H^{\sharp}(\sigma)]^{-1} . \underbrace{\frac{\DD^{\sharp}}{\DD t}[\sigma]}_{\text{the stressing}} = \mathbb{S}^{\sharp}(\sigma) . \frac{\DD^{\sharp}}{\DD t}[\sigma],
	\end{align}
where
	\begin{itemize}
	\item $\frac{\DD^{\sharp}}{\DD t}[\sigma]$ is an appropriate \textbf{objective} rate of the Cauchy stress tensor $\sigma$,
	\item $\H^{\sharp}(\sigma)$ is a constitutive \textbf{fourth-order} tangent stiffness \textbf{tensor},
	\item $\mathbb{S}^{\sharp}(\sigma) = [\H^{\sharp}(\sigma)]^{-1}$ is a constitutive \textbf{fourth-order} tangent compliance \textbf{tensor} and
	\item $D = \sym \, \DD_{\xi} v  = \sym \, L$ is the \textbf{Eulerian strain rate tensor}, measuring the spatial rate of deformation \newline or ``stretching'', where $v$ describes the spatial velocity at a point $\xi$ in the current configuration, and $L = \DD_{\xi} v = \dot{F} \, F^{-1}$ is the velocity gradient.
	\end{itemize}
Hence, the elastic stretching $D$ depends only on the current stress level $\sigma$ and the rate of stress $\frac{\DD^{\sharp}}{\DD t}[\sigma]$ as seen by the objective derivative. \\
\\
As for time derivatives $\frac{\DD^{\sharp}}{\DD t}[\sigma]$ of the Cauchy stress tensor $\sigma$, there are many different possible choices (see e.g.~also the related works by
Xiao et al. \cite{xiao97, xiao98_2}, Bonet and Wood \cite{Bonet2008a},
Aubram \cite{Aubram2017}, Federico \cite{bellini2015, palizi2020consistent}, Fiala \cite{fiala2009,fiala2020objective}),
Govindjee \cite{govindjee1997}, Korobeynikov \cite{korobeynikov2018, korobeynikov2023}, Ortiz et al.~\cite{pinsky1983} and the book by Ogden \cite{Ogden83}). \\

This framework admits usually two basic choices. We can express the rate-form constitutive law in terms of the Cauchy stress $\sigma$, or alternatively in terms of the Kirchhoff stress (the weighted Cauchy stress)
\begin{equation}
	\tau = \det F \cdot \sigma = J \cdot \sigma \, .
\end{equation}
On the other hand, we are free to choose the objective derivatives \cite{pos_cor, leblond1992constitutive, CSP2024}. Provided the induced tangent stiffness tensor $\H^{\sharp}$ in \eqref{eqthedoublestr} is adjusted properly, all formulations are fully equivalent. Here, we consider primarily the combination of the Kirchhoff-stress $\tau$ with the \emph{corotational Zaremba-Jaumann rate} \cite{pos_cor,zaremba1903forme,jaumann1911geschlossenes}, using the vorticity tensor (or spin tensor) $W = \sk \, L, \; L = \dot F \, F^{-1}$,
	\begin{align}
	\label{ZJrate01}
	\boxed{\frac{\DD^{\ZJ}}{\DD t}[\sigma] := \frac{\DD}{\DD t}[\sigma] + \sigma \, W - W \, \sigma = Q \, \frac{\DD}{\DD t}[Q^T \, \sigma \, Q] \, Q^T  \quad \text{for $Q(t) \in \OO(3)$ \quad so that} \;  W =\dot{Q} \, Q^T\in \mathfrak{so}(3)}.
	\end{align}
In \cite{CSP2024} it is shown how to obtain the corresponding induced stiffness tensor $\H^{\ZJ}(\sigma)$ if $\sigma = \sigma (B)$ is given as an isotropic Cauchy elastic law. Indeed, for $\sigma (B) = \widehat{\sigma} (\log B)$ it holds
\begin{equation}
	\label{eq:HZJ}
	\H^{\ZJ} (\sigma). D \coloneqq \DD_B \sigma (B). \, [BD+DB] = \DD_{\log B} \, \widehat{\sigma} (\log B). \, \DD_B \log B. \, \frac{\DD^{\ZJ}}{\DD t}[B]
\end{equation}
together with
\begin{equation}
	\label{eq:const_law}
	\frac{\DD^{\ZJ}}{\DD t}[\sigma] = \H^{\ZJ} (\sigma). D \, .
\end{equation}
The constitutive law \eqref{eq:const_law} can be converted into a rate-formulation for the Kirchhoff stress $\tau$:
\begin{equation}
	\frac{\DD^{\ZJ}}{\DD t}[\tau] = \H^{\ZJ}_{\tau} (\tau). D \, .
\end{equation}
This will be recalled in the next section \ref{sec:converting_hypoelastic}. It is then an observation, that while the fourth-order tensor $\H^{\ZJ}(\sigma)$ is in general not major-symmetric (but always minor symmetric), the corresponding induced stiffness tensor $\H^{\ZJ}_{\tau}(\tau)$ for the Kirchhoff stress will be major-symmetric provided $\tau = \tau (B)$ derives from hyperelasticity.
\\
This will be shown by two different and complementing approaches and this constitutes the major result of this contribution. Demonstrating all symmetries of the stiffness tensor $\H^{\ZJ}(\sigma)$ is important for two reasons. First, from the epistemological point of view, if a symmetry exists, it \emph{must} be studied and accounted for. Second, from the computational point of view, a fourth-order tensor with major and both minor symmetries has 21 independent components, as opposed to the 36 of a fourth-order tensor with only the minor symmetries, and this reduces the memory allocation, which can be advantageous in large Finite Element models.
\\
In addition we will give an explicit example for the calculation of $\H^{\ZJ}(\sigma)$ and $\H^{\ZJ}_{\tau}(\tau)$ for the Hencky energy in section \ref{sec:H_ZJ_Hencky}. Finally, we shortly discuss a customary stability check of Abaqus\textsuperscript{\texttrademark} and Ansys\textsuperscript{\texttrademark} in relation to the Hencky model.

\subsection{Converting the hypoelastic formulation with Zaremba-Jaumann rate and Cauchy stress $\sigma$ into an equivalent hypoelastic formulation for the Kirchhoff stress $\tau$}
\label{sec:converting_hypoelastic}
Assume we know how to express the rate-formulation for the Cauchy stress $\sigma$ and the Zaremba-Jaumann rate, i.e. we have
\begin{equation}
		\frac{\DD^{\ZJ}}{\DD t}[\sigma] = \H^{\ZJ}(\sigma). D
\end{equation}
and we know the fourth-order tensor $\H^{\ZJ}(\sigma)$.
Then it is easy to obtain the rate-formulation for the Kirchhoff stress $\tau$, as is shown next.\\
We recall that $\tau = \det F \cdot \sigma = J \cdot \sigma = F \, S_2 \, F^T$. Then
	\begin{equation}
	\begin{alignedat}{2}
	\frac{\DD}{\DD t}[\tau(t)] &= \frac{\DD}{\DD t}[J \, \sigma] = \dot{J} \, \sigma + J \, \frac{\DD}{\DD t}[\sigma(t)] \\
	&=J \, \tr(D) \, \sigma + J \, \frac{\DD}{\DD t}[\sigma(t)] = J \, \left(\frac{\DD}{\DD t}[\sigma] + \tr(D) \, \sigma \right).
	\end{alignedat}
	\end{equation}
Moreover
	\begin{equation}
	\frac{\DD^{\ZJ}}{\DD t}[\sigma] = \frac{\DD}{\DD t}[\sigma] + \sigma \, W - W \, \sigma \qquad \text{and} \qquad \frac{\DD^{\ZJ}}{\DD t}[\sigma] = \H^{\ZJ}(\sigma) . D
	\end{equation}
and\footnote
{
Compare with the relation (eq. \eqref{eq2.113001})
	\begin{equation}
	\H_{\tau}(\tau) = \lambda \, \left[\H\left(\frac{1}{\lambda} \, \tau \right) + \frac{1}{\lambda} \, \tau \right]
	\end{equation}
between the uniaxial Kirchhoff stress induced stiffness tensor $\H_{\tau}(\tau)$ and the logarithmic stiffness $\H(\sigma)$ (cf. Appendix \ref{sec:special_case_1D_hypo_Kirchhoff_stress}).
}
	\begin{equation}
	\label{eq0.4}
	\begin{alignedat}{2}
	\frac{\DD^{\ZJ}}{\DD t}[\tau] &= \frac{\DD^{\ZJ}}{\DD t}[J \, \sigma] = \frac{\DD}{\DD t}[J \, \sigma] + (J \, \sigma) \, W - W \, (J \, \sigma) = J \, \frac{\DD}{\DD t}[\sigma] + \dot{J} \, \sigma + J \, \sigma \, W - J \, W \, \sigma \\
	&= J \left[ \frac{\DD}{\DD t}[\sigma] + \tr(D) \, \sigma \right] + J \, \sigma \, W - W \, J \, \sigma = J \left[ \frac{\DD}{\DD t}[\sigma] + \sigma \, W - W \, \sigma \right] + \tr(D) \, \tau \\
	&= J \, \H^{\ZJ}(\sigma) . D + \tr(D) \, \tau = J \, \underbrace{\left[\H^{\ZJ}\left(\frac{1}{J} \, \tau \right) . D + \frac{1}{J} \, \tr(D) \, \tau \right]}_{=: \; \mathbb{D}.D} =: \H^{\ZJ}_{\tau}(\tau;J).D \\
	&= J \, \left( \frac{\DD}{\DD t}[\sigma] + \sigma \, W - W \, \sigma + \tr(D) \, \sigma \right) = J \, \frac{\DD^{\text{Hencky}}}{\DD t}[\sigma] \\
	&= J \, \frac{\DD^{\text{Hencky}}}{\DD t}\left[\frac{1}{J} \tau \right], \quad \text{(cf. Korobeynikov \cite[p. 3879]{korobeynikov2023})}.
	\end{alignedat}
	\end{equation}

\noindent It is engineering folklore that $\H^{\ZJ}_{\tau}(\tau)$ is a symmetric matrix viewed as a (self-adjoint) mapping from \break $\R^6 \cong  \Sym(3) \to \Sym(3)$, i.e.~it has major symmetry if $\tau$ derives from an elastic potential (see e.g., Ji et al.~\cite[][Eq.~(33)]{ji2013}, Brannon~\cite{Brannon1998a}). In this respect we formulate the
	\begin{lemma}[Major symmetry for $\H^{\ZJ}_\tau$ in the hyperelastic case]
	For hyperelasticity, the induced Kirchhoff tangent stiffness tensor $\H^{\ZJ}_{\tau}(\tau)$ is major symmetric and $\mathbb{D} \coloneqq \frac{1}{J}\H^{\ZJ}_{\tau}(\tau)\in \Sym_4(6)$.
	\end{lemma}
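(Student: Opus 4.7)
My plan is to reduce the claim to the well-established major symmetry of the Eulerian (spatial) tangent modulus associated with the Oldroyd (upper-convected) rate of the Kirchhoff stress for hyperelastic materials, and then to verify that the kinematic correction distinguishing the Zaremba--Jaumann rate from the Oldroyd rate also carries a major-symmetric tangent. Both contributions will turn out to be minor symmetric by inspection, so the conclusion $\mathbb{D}\in\Sym_4(6)$ follows.

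First, I would introduce the Oldroyd rate
\[
\frac{\DD^{\text{Old}}}{\DD t}[\tau] \;\coloneqq\; \dot{\tau} - L\,\tau - \tau\,L^T,
\]
and, writing $L = D + W$ with $W$ skew, perform a short algebraic cancellation to obtain the decomposition
\[
\frac{\DD^{\ZJ}}{\DD t}[\tau] \;=\; \frac{\DD^{\text{Old}}}{\DD t}[\tau] \;+\; D\,\tau + \tau\,D.
\]
For the first summand I would invoke hyperelasticity in the form $\tau = F\,S_2(C)\,F^T$ with $S_2 = 2\,\DD_C\WW(C)$. Since the Oldroyd rate is exactly the push-forward of the material time-derivative of $S_2$, one finds in indices
\[
\Bigl[\frac{\DD^{\text{Old}}}{\DD t}[\tau]\Bigr]^{ij} \;=\; F_{iA}F_{jB}\,[4\,\DD^2_{CC}\WW]^{ABCD}\,F_{kC}F_{lD}\,D^{kl}.
\]
The material Hessian $\DD^2_{CC}\WW$ is major symmetric in $(AB)\leftrightarrow(CD)$ as the second derivative of a scalar, and a four-index push-forward preserves this symmetry, so the bracketed spatial modulus is major (and both-minor) symmetric.

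For the second summand a direct contraction check with $D_1,D_2\in\Sym(3)$ gives
\[
\langle D_1\tau + \tau D_1,\,D_2\rangle \;=\; \tr\bigl(\tau\,(D_1 D_2 + D_2 D_1)\bigr),
\]
which is manifestly symmetric under $D_1\leftrightarrow D_2$; this is the ``first approach'' of the paper (contraction with $D$) applied to the kinematic correction. Summing the two major- and minor-symmetric pieces gives major (and minor) symmetry of $\H^{\ZJ}_{\tau}(\tau)$, and dividing by the positive scalar $J$ yields $\mathbb{D} = \frac{1}{J}\,\H^{\ZJ}_{\tau}(\tau)\in\Sym_4(6)$.

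The main obstacle is the bookkeeping in the push-forward step: one must verify that the index symmetry $(AB)(CD)\leftrightarrow(CD)(AB)$ of the material Hessian genuinely survives as $(ij)(kl)\leftrightarrow(kl)(ij)$ after contraction with four copies of $F$. In coordinates this is routine, but a coordinate-free rendering requires the symmetrized tensor products $\otimesup$, $\otimesdown$ and their combinations -- this is precisely the ``second approach'' the paper announces, and the place where, as the introduction warns, previous works went astray by not applying all the required symmetrisations.
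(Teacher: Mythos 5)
Your proposal is correct and is essentially the paper's own argument: the paper's proof of the Lemma derives exactly the decomposition $\frac{\DD^{\ZJ}}{\DD t}[\tau] = F\,\dot{S}_2\,F^T + D\,\tau + \tau\,D$ (your Oldroyd/Lie-derivative summand is precisely the $F\,\dot{S}_2\,F^T$ term, and working with $C$ instead of $E=\tfrac12(C-\id)$ is immaterial), and then establishes major symmetry of each summand by the same two contraction arguments you give --- the self-adjointness of the material Hessian surviving the four-fold push-forward by $F$, and the manifest $D_1\leftrightarrow D_2$ symmetry of $\tr\bigl(\tau\,(D_1D_2+D_2D_1)\bigr)$. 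Naming the first summand the Oldroyd rate merely makes explicit a connection the paper draws later in its second (absolute-tensor) approach, so no new gap or new route is introduced.
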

\begin{proof}
Starting with the Zaremba-Jaumann derivative of the Kirchhoff stress $\tau$ we obtain
	\begin{equation}
	\label{eq0.5}
	\begin{alignedat}{2}
	\frac{\DD^{\ZJ}}{\DD t}[\tau] &= \frac{\DD}{\DD t}[\tau] + [\tau \ W - W \, \tau] = \frac{\DD}{\DD t}[F \, S_2 \, F^T] + [\tau \, W - W \, \tau] \\
	&= \dot{F} \, S_2 \, F^T + F \, \dot{S}_2 \, F^T + F \, S_2 \, \dot{F}^T + [\tau \, W - W \, \tau] \\
	&= F \, \dot{S}_2 \, F^T + \underbrace{\dot{F} \, F^{-1}}_{L} \, \underbrace{F \, S_2 \, F^T}_{\tau} + \underbrace{F \, S_2 \, F^T}_{\tau} \, \underbrace{F^{-T} \, \dot{F}^T}_{L^T} + [\tau \, W - W \, \tau] \\
	&= F \, \dot{S}_2 \, F^T + L \, \tau + \tau \, L^T + \tau \, W - W \, \tau = F \, \dot{S}_2 \, F^T + D \, \tau + \tau \, D, \qquad L = D + W \\
	&= F \, \DD_E S_2(E).[\dot E] \, F^T + D \, \tau + \tau \, D  \\
	&= F \, \DD_E S_2(E).[F^T \, D \, F] \, F^T + D \, \tau + \tau \, D =: \H^{\ZJ}_{\tau}(\tau).D.
	\end{alignedat}
	\end{equation}
In the second-last equation of \eqref{eq0.5} we used that by writing $S_2 = S_2(E)$ with $E = \frac12 \, (F^T \, F - \id)$, we obtain $\dot S_2(E) = \DD_E S_2(E). \dot E$, where
	\begin{align}
	\dot E = \frac12 \, (\dot F^T \, F + F^T \, \dot F) = \frac12 \, (F^T \, F^{-T} \, \dot F^T \, F + F^T \, \dot F \, F^{-1} \, F) = \frac12 \, (F^T \, L^T \, F + F^T \, L \, F) = F^T \, D \, F \, .
	\end{align}
Since major symmetry of $\H^{\ZJ}_{\tau}(\tau)$ amounts to $\langle \H^{\ZJ}_{\tau}(\tau).D_1, D_2 \rangle = \langle \H^{\ZJ}_{\tau}(\tau).D_2,D_1 \rangle$ and the mapping \break $\Sym(3) \to \Sym(3), \; D \mapsto \tau \, D + D \, \tau$ is already major symmetric, since with $\tau^T = \tau$
	\begin{align}
	\langle \tau \, D_1 + D_1 \, \tau, D_2 \rangle = \langle D_1 , \tau^T \, D_2 + D_2 \, \tau^T \rangle = \langle D_1 , \tau \, D_2 + D_2 \, \tau \rangle \, .
	\end{align}
It therefore remains to prove that the mapping $\Sym(3) \to \Sym(3), \; D \mapsto \; F \, \dot{S}_2 \, F^T = F \, \DD_E S_2(E).[F^T \, D \, F]$ is major symmetric, too. 
By assumption, the formulation is hyperelastic, which means that
	\begin{align}
	S_2(E) = \DD_E \WW(E),
	\end{align}
where $\WW$ is the elastic energy. Thus it follows that $\DD_E S_2(E) = \DD^2_E \WW(E)$ is itself self-adjoint, i.e. \break $(\DD_E S_2(E))^T = \DD_E S_2(E)$, meaning that
	\begin{align}
	\langle \DD_E S_2(E). H_1, H_2 \rangle = \langle \DD_E S_2(E). H_2, H_1 \rangle \qquad \forall \, H_1, H_2 \in \Sym(3).
	\end{align}
Therefore we obtain for the mapping $\Sym(3) \to \Sym(3), \; D \mapsto F \, \DD_E S_2(E).[F^T \, D \, F] \, F^T$
	\begin{equation}
	\begin{alignedat}{3}
	\langle F \, \DD_E S_2(E).[F^T \, D_1 \, F] \, F^T , D_2 \rangle &= \langle \DD_E S_2(E).[F^T \, D_1 \, F], F^T \, D_2 \, F \rangle 
	&&=\langle F^T \, D_1 \, F, (\DD_E S_2(E))^T.[F^T \, D_2 \, F] \rangle \\
	&=\langle F^T \, D_1 \, F, \DD_E S_2(E).[F^T \, D_2 \, F] \rangle
	&&= \langle D_1, F \, \DD_E S_2(E).[F^T \, D_2 \, F] \, F^T \rangle,
	\end{alignedat}
	\end{equation}
which is the required major symmetry.
\end{proof}
	\begin{remark}
	Minor symmetry of $\H^{\ZJ}_{\tau}(\tau)$ is clear from \eqref{eq0.5} since $\H^{\ZJ}_{\tau}(\tau).D \in \Sym(3), \; \forall \, D \in \Sym(3)$. Moreover, minor and major symmetry of $\H^{\ZJ}_{\tau}(\tau)$ are independent of assuming isotropy.

Looking at $\eqref{eq0.4}_3$ it is clear that major symmetry of $\H^{\ZJ}_{\tau}(\tau)$ does not imply that $\H^{\ZJ}(\sigma)$ is major symmetric, apart from the incompressible case in which $\tr(D) = 0$.
	\end{remark}
We observe that in the \textbf{incompressible case} we have $J=1$ and $\tr(D)$ = 0 and therefore
	\begin{align}
	\H_{\tau}^{\ZJ}(\tau;1) = \H^{\ZJ}(\tau) \qquad \text{together with} \qquad \frac{\DD^{\ZJ}}{\DD t}[\tau] = \H^{\ZJ}(\tau) . D
	\end{align}
so that the latter rate formulation for $\tau$ coincides with the Cauchy stress rate-formulation, as it should be for the incompressible case in which $\sigma = \tau$. \\
\\
There is also a more direct way to obtain the rate-formulation in terms of the Zaremba-Jaumann rate and the Kirchhoff stress $\tau$ in the case of isotropy. Indeed, for isotropy the Kirchhoff stress satisfies
	\begin{align}
	Q \, \tau(B) \, Q^T = \tau(Q \, B \, Q^T) \qquad \forall \, Q \in \OO(3).
	\end{align}
Therefore, we obtain (cf. \cite{CSP2024,pos_cor})
	\begin{equation}
	\label{eq1.18}
	\begin{alignedat}{2}
	\frac{\DD^{\ZJ}}{\DD t}[\tau] = \DD_B \tau(B) . [B \, D + D \, B] &=: \H^{\ZJ}_{\tau}(\tau). D \qquad \text{if} \quad B \mapsto \tau(B) \quad \text{is invertible} \\
	&= J \, \mathbb{D}(\tau). D \qquad \text{(Abaqus\textsuperscript{\texttrademark}-format)} .
	\end{alignedat}
	\end{equation}
According to the Abaqus\textsuperscript{\texttrademark} user manual, for 3D continuum elements Abaqus\textsuperscript{\texttrademark} uses the elasticity tensor associated with the Zaremba-Jaumann rate for the Kirchhoff stress $\tau$, i.e. the positive definiteness of $\H^{\ZJ}_{\tau}$ (i.e. $\mathbb{D} (\tau)$) is checked for material stability. \\

\subsection{Alternative approach and absolute expression of $\H^{\ZJ}_{\tau}(\tau)$}
\label{sec:Alternative_approach}
An absolute expression of the Zaremba-Jaumann elasticity tensor $\H^{\ZJ}_{\tau}(\tau)$, i.e., an expression not involving the double contraction with the deformation rate $D$, can be obtained using the special tensor products $\otimesdown$ and $\otimesup$ and, specifically, their symmetrisation $\otimesdownup$, as defined in the work by Curnier et al.~\cite{Curnier1995a}. We shall define these tensors for the case of Cartesian coordinates, but general covariant expressions can be found in \cite{Federico2012a,Federico2012b,bellini2015}.
As in the remainder of this work, $\mathbb{T} \ldot Z$ means that the fourth-order tensor $\mathbb{T}$ acts as a linear map on the second-order tensor $Z$. The transpose of the fourth-order tensor $\mathbb{T}$ is defined as the tensor $\mathbb{T}^T$ such that, for every second-order tensors $Y$ and $Z$,
\begin{align}
\label{eq:transpose_fourth-order}
\langle Y \,,\, \mathbb{T} \ldot Z \rangle = \langle Z \,,\, \mathbb{T}^T \ldot Y \rangle,
\qquad
Y_{ij} \, \mathbb{T}_{ijkl} \, Z_{kl} = Z_{kl} \, (\mathbb{T}^T)_{klij} \, Y_{ij}
\end{align}
In the literature on the special tensor products $\otimesdown$ and $\otimesup$ (e.g., \cite{Curnier1995a,Gasser2006b,Federico2012a,Federico2012b,bellini2015,Grillo2018a,palizi2020consistent}), the action $\mathbb{T} \ldot Z$ of $\mathbb{T}$ on $Z$ is usually denoted with the double contraction symbol, i.e., the colon ``$\dc$'', as $\mathbb{T} \dc Z$. In the same literature, the colon ``$\dc$'' is also used for the double contraction of two second-order tensors, $Y \dc Z$, which here is denoted by the scalar product $\langle Y , Z \rangle$.

\paragraph{Standard and special tensor products}
The standard tensor product $\otimes$ maps two second-order tensors $P$ and $Q$ into the fourth-order tensor $P \otimes Q$, defined by
\begin{align}
\label{eq:regular_tensor_product_action}
(P \otimes Q) \ldot Z            & = \langle Q , Z \rangle \, P                        &
(P \otimes Q)_{ijkl} \, Z_{kl} & = (Q_{kl} \, Z_{kl}) \, P_{ij},
\end{align}
for every second-order tensor $Z$. Therefore, the component expression of $P \otimes Q$ is the familiar
\begin{align}
\label{eq:regular_tensor_product_comp}
(P \otimes Q)_{ijkl} = P_{ij} \, Q_{kl}P_{ij}.
\end{align}
It is easy to verify, in components or with the definition~\eqref{eq:regular_tensor_product_action} of tensor product and the definition~\eqref{eq:transpose_fourth-order} of transpose of a fourth-order tensor, that 
\begin{align}
\label{eq:regular_tensor_product_transpose}
(P \otimes Q)^T = Q \otimes P.
\end{align}
Indeed, using, e.g., \eqref{eq:regular_tensor_product_action} and~\eqref{eq:transpose_fourth-order}, we have
\begin{align}
    \langle Y \,,\, (P \otimes Q) \ldot Z \rangle
& = \langle Y \,,\, \langle Q , Z \rangle \, P \rangle
  = \langle Y , P \rangle \, \langle Q , Z \rangle
  = \langle Z , Q \rangle \, \langle P , Y \rangle   \nonumber\\
& = \langle Z \,,\, \langle P , Y \rangle \, Q \rangle
  = \langle Z \,,\, (Q \otimes P) \, Y \rangle
  = \langle Z \,,\, (P \otimes Q)^T \, Y \rangle \, ,
\end{align}
which, for the arbitrariness of $Y$ and $Z$, implies~\eqref{eq:regular_tensor_product_transpose}.

The special tensor product $\otimesdown$ (denoted $\boxtimes$ by some authors, e.g., \cite{DelPiero1979a,Lucchesi1992a,Jog2006a,korobeynikov2018,korobeynikov2023}) maps two second-order tensors $P$ and $Q$ into the fourth-order tensor $P \otimesdown Q$ defined by
\begin{align}
\label{eq:tensordown_action}
(P \otimesdown Q) \ldot Z            & = P \, Z \, Q^T,                &
(P \otimesdown Q)_{ijkl} \, Z_{kl} & = P_{ik} \, Z_{kl} \, Q_{jl},
\end{align}
for every second-order tensor $Z$. The component expression of $P \otimesdown Q$ is thus
\begin{align}
\label{eq:tensordown_comp}
(P \otimesdown Q)_{ijkl} & = P_{ik} \, Q_{jl}.
\end{align}
For the case of the special tensor product $\otimesdown$, the transpose is
\begin{align}
\label{eq:tensordown_transpose}
(P \otimesdown Q)^T = P^T \otimesdown Q^T,
\end{align}
which can be verified in components or by the definition~\eqref{eq:tensordown_action} of $\otimesdown$ and the general definition~\eqref{eq:transpose_fourth-order} of transpose of a fourth-order tensor.

The special tensor product $\otimesup$ maps two second-order tensors $P$ and $Q$ into the fourth-order tensor $P \otimesup Q$ defined by
\begin{align}
\label{eq:tensorup_action}
(P \otimesup Q) \ldot Z            & = P \, Z^T Q^T,                &
(P \otimesup Q)_{ijkl} \, Z_{kl} & = P_{il} \, Z_{kl} \, Q_{jk},
\end{align}
for every second-order tensor $Z$. The component expression of $P \otimesup Q$ is thus
\begin{align}
\label{eq:tensorup_comp}
(P \otimesup Q)_{ijkl} & = P_{il} \, Q_{jk}.
\end{align}
For the case of the special tensor product $\otimesup$, the transpose is
\begin{align}
\label{eq:tensorup_transpose}
(P \otimesup Q)^T = Q^T \otimesup P^T,
\end{align}
which, again, can be verified in components or by the definition~\eqref{eq:tensorup_action} of $\otimesup$ and the general definition~\eqref{eq:transpose_fourth-order}.

\paragraph{Symmetrisation of the special tensor products}
In general, the special tensor products $\otimesdown$ and $\otimesup$ possess neither the major symmetry nor the minor symmetries. However, symmetrisations exist that turn out to be very useful.

The tensor product $\otimesdownup$ (denoted $\odot$ by some authors, e.g., \cite{Holzapfel2000a}) is defined by
\begin{equation}
\label{eq:tensordownup}
P \otimesdownup Q          = \tfrac{1}{2} (P \otimesdown Q + Q \otimesup P), \qquad
(P \otimesdownup Q)_{ijkl} = \tfrac{1}{2} (P_{ik} \, Q_{jl} + Q_{il} \, P_{jk}).
\end{equation}
The transpose of the fourth-order $P \otimesdownup Q$ is \emph{not} obtainable via $\otimesdownup$. Indeed, using~\eqref{eq:tensordown_transpose} and~\eqref{eq:tensorup_transpose}, we have
\begin{equation}
\label{eq:tensordownup_transpose}
(P \otimesdownup Q)^T = \tfrac{1}{2} (P^T \otimesdown Q^T + P^T \otimesup Q^T ).
\end{equation}
The tensor $P \otimesdownup Q$ possesses the \emph{left} minor symmetry \cite{bellini2015}, i.e., the minor symmetry on the first pair of indices. This can be verified easily in components, or by studying the action of $P \otimesdownup Q$ on an arbitrary second-order tensor $Z$:
\begin{equation}
\label{eq:tensordownup_left_minor_sym}
(P \otimesdownup Q) \ldot Z
 = \tfrac{1}{2} (P \, Z \, Q^T + Q \, Z^T P^T) = \tfrac{1}{2} (P \, Z \, Q^T + (P \, Z Q^T)^T)  = \mathrm{sym}(P \, Z \, Q^T) = \mathrm{sym}[(P \otimesdown Q) \ldot Z].
\end{equation}
In contrast, $P \otimesdownup Q$ does \emph{not} possess the right minor symmetry and, again, this can be verified in components, or by studying the action of its transpose $(P \otimesdownup Q)^T$ on an arbitrary second-order tensor $Y$:
\begin{align}
\label{eq:tensordownup_no_right_minor_sym}
(P \otimesdownup Q)^T \ldot Y
& = \tfrac{1}{2} (P^T \otimesdown Q^T + P^T \otimesup Q^T ) . Y
  = \tfrac{1}{2} (P^T Y \, Q + P^T Y^T Q)                       \nonumber\\
& = P^T \otimesup Q^T \ldot [\tfrac{1}{2} (Y^T + Y)]
  = P^T \otimesup Q^T \ldot \mathrm{sym}(Y).
\end{align}
The further symmetrisation
\begin{align}
\label{eq:tensordownup_symmetrised}
P \otimesdownup Q + Q \otimesdownup P
& = \tfrac{1}{2} (P \otimesdown Q + Q \otimesup P)
  + \tfrac{1}{2} (Q \otimesdown P + P \otimesup Q),  \\
(P \otimesdownup Q + Q \otimesdownup P)_{ijkl}
& = \tfrac{1}{2} (P_{ik} \, Q_{jl} + Q_{il} \, P_{jk})
  + \tfrac{1}{2} (Q_{ik} \, P_{jl} + P_{il} \, Q_{jk})  \nonumber
\end{align}
possesses \emph{both} minor symmetries. Indeed, the left minor symmetry follows from the fact that each of $P \otimesdownup Q$ and $Q \otimesdownup P$ possesses the minor symmetry, by virtue of~\eqref{eq:tensordownup_left_minor_sym}, and the right minor symmetry can be verified easily in components or, as above, by studying the action of the transpose $(P \otimesdownup Q + Q \otimesdownup P)^T$, which is easily obtainable from~\eqref{eq:tensordownup_transpose}, on an arbitrary second-order tensor $Y$:
\begin{align}
\label{eq:tensordownup_symmetrised_right_minor_sym}
(P \otimesdownup Q + Q \otimesdownup P)^T \ldot Y
& = \tfrac{1}{2} (P^T \otimesdown Q^T + P^T \otimesup Q^T + Q^T \otimesdown P^T + Q^T \otimesup P^T) \ldot Y
\nonumber\\
& = \tfrac{1}{2} (P^T Y \, Q + P^T Y^T Q + Q^T Y \, P + Q^T Y^T P)
\nonumber\\
& = \tfrac{1}{2} (P^T Y \, Q + Q^T Y^T P) + \tfrac{1}{2} (P^T Y^T Q + Q^T Y \, P)
\nonumber\\
& = \tfrac{1}{2} ((Q^T Y^T P)^T + Q^T Y^T P) + \tfrac{1}{2} (P^T Y^T Q + (P^T Y^T Q)^T)
\nonumber\\
& = \mathrm{sym}(Q^T Y^T P + P^T Y^T Q) = \mathrm{sym}[(Q^T \otimesup P^T + P^T \otimesup Q^T) \ldot Y]\, .
\end{align}
Finally, if both $P$ and $Q$ are symmetric, then $P \otimesdownup Q + Q \otimesdownup P$ also possesses the major symmetry, i.e., for every second-order tensors $Y$ and $Z$,
\begin{align}
\label{eq:tensordownup_symmetrised_major_sym}
\langle Y \,,\, (P \otimesdownup Q + Q \otimesdownup P) \ldot Z \rangle = \langle Z \,,\, (P \otimesdownup Q + Q \otimesdownup P) \ldot Y \rangle \, .
\end{align}
As usual, this can be verified in components, or by working out the (double) contractions.

\paragraph{Lie derivative of the Kirchhoff stress and spatial elasticity tensor} 
The Lie derivative of the Kirchhoff stress $\tau$ is related directly to the material elasticity tensor
\begin{align}
\label{eq:material_elasticity tensor}
\mathbb{C} := \DD_E S_2(E)
\end{align}
as already seen in Eq.~\eqref{eq0.5}. Indeed, the material elasticity tensor~\eqref{eq:material_elasticity tensor} features in the rate expression of the second Piola-Kirchhoff stress,
\begin{align}
\label{eq:rate_second_Piola}
\dot{S}_2 = \DD_E S_2(E) \ldot \dot{E} = \mathbb{C} \ldot \dot{E}
\end{align}
and, since the second Piola-Kirchhoff stress $S_2$ is the pull-back of the Kirchhoff stress $\tau$, i.e.,
\begin{align}
\label{eq:tau_and_second_Piola}
S_2 = F^{-1} \tau \, F^{-T}, 
\end{align}
the push-forward of the time derivative~\eqref{eq:rate_second_Piola} of $S_2$ is, by definition, the Lie derivative of the Kirchhoff stress $\tau$, as per Eq.~\eqref{eq:Lie_derivative_tau}. Therefore,
\begin{align}
\label{eq:Lie_derivative_tau_material_elasticity_tensor}
\mathcal{L}_v \tau =
F \left[ \frac{\DD}{\DD t} [F^{-1} \tau \, F^{-T}] \right] F^T =
F \, \dot{S}_2 F^T =
F \, [\mathbb{C} \ldot \dot{E}] \, F^T.
\end{align}
We aim at expressing the Lie derivative~\eqref{eq:Lie_derivative_tau_material_elasticity_tensor} as a function of the \emph{spatial elasticity tensor},
\begin{equation}
\label{eq:spatial_elasticity_tensor}
\mathbbs{C}        = J^{-1} \, (F \otimesdown F) \ldot \left[ \mathbb{C} \ldot (F^T \otimesdown F^T) \right] ,  \qquad
\mathbbs{C}_{ijkl}  = J^{-1} \, F_{iI} \, F_{jJ} \, \mathbb{C}_{IJKL} \, F_{kK} \, F_{lL},
\end{equation}
which is the full backward Piola transform of the material elasticity tensor $\mathbb{C}$, and relates the Truesdell rate (cf. Appendix \ref{sec:Some_objective_derivatives}) of the Cauchy stress $\sigma$ to the deformation rate $D$ \cite{Marsden1983a,Bonet2008a,Federico2012a,Federico2012b}. Note how the use of the special tensor product $\otimesdown$ allows to write the backward Piola transformation~\eqref{eq:spatial_elasticity_tensor} in component-free formalism, via the tensor $F \otimesdown F$ and its transpose $(F \otimesdown F)^T = F^T \otimesdown F^T$ \cite{Ateshian2010a,Federico2012b}. The inverse relation of~\eqref{eq:spatial_elasticity_tensor} is
\begin{equation}
\label{eq:material_from_spatial_elasticity_tensor}
\mathbb{C}         = J \, (F^{-1} \otimesdown F^{-1}) \ldot \left[ \mathbbs{C} \ldot (F^{-T} \otimesdown F^{-T}) \right],  \qquad
\mathbb{C}_{IJKL}  = J \, F^{-1}_{Ii} \, F^{-1}_{Jj} \, \mathbbs{C}_{ijkl} \, F^{-1}_{Kk} \, F^{-1}_{Ll}
\end{equation}
It is easy to show (for instance, in components), that substitution of~\eqref{eq:material_from_spatial_elasticity_tensor} into~\eqref{eq:Lie_derivative_tau_material_elasticity_tensor} yields
\begin{align}
\label{eq:Lie_derivative_tau_spatial_elasticity_tensor}
\mathcal{L}_v \tau = J \, \mathbbs{C} \ldot D \, ,
\end{align}
where we exploited the fact that the rate of deformation $D$ is the push-forward of the time derivative $\dot{E}$ of the Green-Lagrange strain $E$:
\begin{align}
\label{eq:deformation_rate_and_E_dot}
D = F^{-T} \dot{E} \, F^{-1}.
\end{align}

\paragraph{Zaremba-Jaumann rate of the Kirchhoff stress and its conjugated elasticity tensor}
Here, we use the Cartesian expressions of some results from Appendix~\ref{appendix:Jaumann_and_Lie}, which were obtained in covariant formalism. Substitution of the expression~\eqref{eq:Lie_derivative_tau_spatial_elasticity_tensor} of the Lie derivative of $\tau$ into the Zaremba-Jaumann rate~\eqref{eq:Jaumann_rate_and_Lie_derivative_tau} of $\tau$ yields
\begin{align}
\label{eq:Jaumann_rate_tau_spatial_elasticity_tensor}
\frac{\DD^{\ZJ}}{\DD t}[\tau] = J \, \mathbbs{C} \ldot D + D \, \tau + \tau \, D.
\end{align}
In order to find an absolute expression of the elasticity tensor $\H^{\ZJ}_{\tau}(\tau)$ relating the Zaremba-Jaumann rate of $\tau$ to the deformation rate $D$, we need to express the term $D \, \tau + \tau \, D$ as a fourth-order tensor double-contracted with $D$. We note that, by definition of identity tensor $\id$ (which, in this case, means the \emph{spatial} identity tensor and, rigorously speaking, would be the inverse metric tensor $g^{-1}$, which has contravariant components $g^{ij}$, similarly to the Kirchhoff stress $\tau$, which has contravariant components $\tau^{ij}$), we have the identity
\begin{align}
D \, \tau + \tau \, D = \id \, D \, \tau + \tau \, D \, \id.
\end{align}
By virtue of the symmetry all the tensors involved, i.e., $\id^T = \id$, $D^T = D$, $\tau^T = \tau$, this can be written 
\begin{align}
\label{eq:identity_relation}
D \, \tau + \tau \, D & = \tfrac{1}{2} \, (\id \, D \, \tau^T + \id \, D^T \tau^T)
                        + \tfrac{1}{2} \, (\tau \, D \, \id^T + \tau \, D^T \id^T) \nonumber\\
                      & = \tfrac{1}{2} \, (\id \, D \, \tau^T + \tau \, D^T \id^T)
                        + \tfrac{1}{2} \, (\tau \, D \, \id^T + \id \, D^T \tau^T).
\end{align}
In Eq.~\eqref{eq:identity_relation}, we recognise the symmetrised special tensor product $\id \otimesdownup \tau + \tau \otimesdownup \id$ defined in Eq.~\eqref{eq:tensordownup_symmetrised}, i.e., we can write~\eqref{eq:identity_relation} as
\begin{align}
\label{eq:identity_relation_otimesdownup_tau}
D \, \tau + \tau \, D = [ \id \otimesdownup \tau + \tau \otimesdownup \id ] \ldot D.
\end{align}
Substitution of~\eqref{eq:identity_relation_otimesdownup_tau} into~\eqref{eq:Jaumann_rate_tau_spatial_elasticity_tensor} yields
\begin{align}
\frac{\DD^{\ZJ}}{\DD t}[\tau] = [ J \, \mathbbs{C} + \id \otimesdownup \tau + \tau \otimesdownup \id ] \ldot D,
\end{align}
which compared with~\eqref{eq0.5}, finally gives the sought absolute expression of $\H^{\ZJ}_{\tau}(\tau)$, as
\begin{align}
\label{eq:absolute_Jaumann-rate_tau_elasticity_tensor}
\H^{\ZJ}_{\tau}(\tau) = J \, \mathbbs{C} + \id \otimesdownup \tau + \tau \otimesdownup \id \, ,
\end{align}
or, in components:
\begin{align}
\label{eq:absolute_Jaumann-rate_tau_elasticity_tensor_comp}
[\H^{\ZJ}_{\tau}(\tau)]_{ijkl}
= J \, \mathbbs{C}_{ijkl}
+ \tfrac{1}{2} (\delta_{ik} \, \tau_{jl} + \tau_{il} \, \delta_{jk})
+ \tfrac{1}{2} (\tau_{ik} \, \delta_{jl} + \delta_{il} \, \tau_{jk}) \, .
\end{align}
The elasticity tensor $\H^{\ZJ}_{\tau}(\tau)$ defined in Eq.~\eqref{eq:absolute_Jaumann-rate_tau_elasticity_tensor} possesses the major symmetry and both minor symmetries, which follow from those of $\mathbbs{C}$ and $ \id \otimesdownup \tau + \tau \otimesdownup \id$ (the major symmetry of $\id \otimesdownup \tau + \tau \otimesdownup \id$ descends from the symmetry of $\tau$ and $\id$).

The spatial elasticity tensor $\mathbbs{C}$ inherits the major symmetry and both minor symmetries from the material elasticity tensor $\mathbb{C}$, which, in turn, enjoys the major symmetry because of the smoothness of the elastic potential (Schwarz' theorem on the symmetry of the second partial derivatives), and both minor symmetries because of the symmetry of the Green-Lagrange strain $E$. The symmetrised special tensor product $\id \otimesdownup \tau + \tau \otimesdownup \id$ possesses both minor symmetries, as shown in Eqs.~\eqref{eq:tensordownup_left_minor_sym} and~\eqref{eq:tensordownup_symmetrised_right_minor_sym}, and, by virtue of the symmetry of $\tau$ and $\id$, it also possesses the major symmetry, as shown in Eq.~\eqref{eq:tensordownup_symmetrised_major_sym}. In a past work~\cite{palizi2020consistent}, the major symmetry was missed because, in place of the symmetrised contribution $\id \otimesdownup \tau + \tau \otimesdownup \id$, the contribution $\tau \otimesdown \id + \id \otimesup \tau = 2 (\tau \otimesdownup \id)$ was used. This was the result of not accounting explicitly for the symmetry of $\tau$ and $\id$.

We remark that the component expression~\eqref{eq:absolute_Jaumann-rate_tau_elasticity_tensor_comp} is particularly convenient for the numerical implementation of $\H^{\ZJ}_{\tau}(\tau)$, regardless of the specific software package at hand. As mentioned in Section~\ref{sec:converting_hypoelastic}, the programming of Abaqus\textsuperscript{\texttrademark} \texttt{UMAT} user-defined material subroutines requires the elasticity tensor
\begin{align}
\label{eq:absolute_Jaumann-rate_tau_elasticity_tensor_Abaqus}
\mathbb{D}(\tau) = J^{-1} \, \H^{\ZJ}_{\tau}(\tau) = \mathbbs{C} + J^{-1} \, (\id \otimesdownup \tau + \tau \otimesdownup \id).
\end{align}
Lastly, we note again that, in covariant formalism, the identity $\id$ must be replaced by the inverse metric tensor $g^{-1}$.

\paragraph{Truesdell rate of the Cauchy stress and spatial elasticity tensor} 
Similarly to the case of the Lie derivative of the Kirchhoff stress $\tau$, the Truesdell rate of the Cauchy stress is related directly to the material elasticity tensor $\mathbb{C}$ of Eq.~\eqref{eq:material_elasticity tensor}. Indeed, the second Piola-Kirchhoff stress $S_2$ is the the backward Piola transform of the Cauchy stress $\sigma$, i.e.,
\begin{align}
\label{eq:sigma_and_second_Piola}
S_2 = J \, F^{-1} \sigma \, F^{-T}, 
\end{align}
and the forward Piola transform of the time derivative~\eqref{eq:rate_second_Piola} of $S_2$ is, by definition, the Truesdell rate of the Cauchy stress $\sigma$, as per Eq.~\eqref{eq:Truesdell_rate_sigma_Piola_transforms}. Therefore,
\begin{align}
\label{eq:Truesdell_rate_sigma_material_elasticity_tensor}
\frac{\DD^{\TR}}{\DD t} [\sigma] =
J^{-1} F \left[ \frac{\DD}{\DD t} [J \, F^{-1} \sigma \, F^{-T}] \right] F^T =
J^{-1} F \, \dot{S}_2 F^T =
J^{-1} F \, [\mathbb{C} \ldot \dot{E}] \, F^T.
\end{align}
With the same procedure seen above for the Lie derivative $\mathcal{L}_v \tau$ of the Kirchhoff stress $\tau$, we can show that the Truesdell rate~\eqref{eq:Truesdell_rate_sigma_material_elasticity_tensor} of the Cauchy stress $\sigma$ is related to the spatial elasticity tensor $\mathbbs{C}$ of~\eqref{eq:spatial_elasticity_tensor} via
\begin{align}
\label{eq:Truesdell_rate_sigma_spatial_elasticity_tensor}
\frac{\DD^{\TR}}{\DD t} [\sigma] = \mathbbs{C} \ldot D \, .
\end{align}

\paragraph{Zaremba-Jaumann rate of the Cauchy stress and its conjugated elasticity tensor}
We refer again to Appendix~\ref{appendix:Jaumann_and_Lie}. Substitution of the expression~\eqref{eq:Truesdell_rate_sigma_spatial_elasticity_tensor} of the Truesdell rate of $\sigma$ into the Zaremba-Jaumann rate~\eqref{eq:Jaumann_rate_and_Truesdell_rate_sigma} of $\sigma$ yields
\begin{align}
\label{eq:Jaumann_rate_sigma_spatial_elasticity_tensor}
\frac{\DD^{\ZJ}}{\DD t}[\sigma] = \mathbbs{C} \ldot D + D \, \sigma + \sigma \, D - \tr(D) \, \sigma \, .
\end{align}
The term $D \, \sigma + \sigma \, D$ is obtained by simply dividing Eq.~\eqref{eq:identity_relation_otimesdownup_tau} by $J$, as
\begin{align}
\label{eq:identity_relation_otimesdownup_sigma}
D \, \sigma + \sigma \, D = [ \id \otimesdownup \sigma + \sigma \otimesdownup \id ] \ldot D,
\end{align}
and the term $\tr(D) \, \sigma$ is obtained via the definitions of the trace and (standard) tensor product, as
\begin{align}
\label{eq:identity_relation_trace_D}
\tr(D) \, \sigma = \langle \id , D \rangle \, \sigma = (\sigma \otimes \id) \ldot D \, .
\end{align}
Substitution of~\eqref{eq:identity_relation_otimesdownup_sigma} and~\eqref{eq:identity_relation_trace_D} into~\eqref{eq:Jaumann_rate_sigma_spatial_elasticity_tensor}
yields
\begin{align}
\frac{\DD^{\ZJ}}{\DD t}[\sigma] = [ \mathbbs{C} + \id \otimesdownup \sigma + \sigma \otimesdownup \id - (\sigma \otimes \id) ] \ldot D \, ,
\end{align}
which, compared with \eqref{eq:HZJ} and \eqref{eq:HZJ2}
 yields the absolute expression of $\H^{\ZJ}(\sigma)$ as
\begin{align}
\label{eq:absolute_Jaumann-rate_sigma_elasticity_tensor}
\H^{\ZJ}(\sigma) = \mathbbs{C} + \id \otimesdownup \sigma + \sigma \otimesdownup \id - \sigma \otimes \id \, ,
\end{align}
or, in components,
\begin{align}
\label{eq:absolute_Jaumann-rate_sigma_elasticity_tensor_comp}
[\H^{\ZJ}(\sigma)]_{ijkl}
= \, \mathbbs{C}_{ijkl}
+ \tfrac{1}{2} (\delta_{ik} \, \sigma_{jl} + \sigma_{il} \, \delta_{jk})
+ \tfrac{1}{2} (\sigma_{ik} \, \delta_{jl} + \delta_{il} \, \sigma_{jk})
- \sigma_{ij} \, \delta_{kl} \, .
\end{align}
All considerations made for $\H^{\ZJ}_{\tau}(\tau)$ hold for $\H^{\ZJ}_{\sigma}(\sigma)$, except for the fact that $\H^{\ZJ}_{\sigma}(\sigma)$ does \emph{not} posses major symmetry, due to the presence of the term $\sigma \otimes \id$ which can only be major symmetric when the Cauchy stress $\sigma$ is hydrostatic: not a very interesting scenario, and certainly not a general scenario. Lastly, we note again that, in covariant formalism, the identity $\id$ must be replaced by the inverse metric tensor $g^{-1}$.\\

As for the relevance of the representation \eqref{eq:absolute_Jaumann-rate_sigma_elasticity_tensor_comp} we observe that indeed, in~\cite[Eq.~(6)]{Li1998a}, in~\cite[Eq.~(2.22)]{Wang1995a} and in~\cite[Eq.~(22)]{Morris2000a}, the following (largely ad hoc) constitutive stiffness tensor is analysed for stability of the elastic law:
\begin{equation}
	\H_{ijkl} (\sigma) = \mathbbs{C}_{ijkl} + \tfrac{1}{2} (\delta_{ik} \, \sigma_{jl} + \sigma_{il} \, \delta_{jk} + \sigma_{ik} \, \delta_{jl} + \delta_{il} \, \sigma_{jk} - 2 \, \delta_{kl} \, \sigma_{ij}) 
\end{equation}
where $\mathbbs{C}$ is interpreted by us as the spatial elasticity tensor occurring in~\eqref{eq:Truesdell_rate_sigma_spatial_elasticity_tensor} (\cite[eq.~(2.22)]{Wang1995a}) and the authors define [material] instability to be present at $\det \H(\sigma) = 0$.
Taking into account eq.~\eqref{eq:absolute_Jaumann-rate_sigma_elasticity_tensor_comp} it occurs therefore that the above $\H_{ijkl}$ coincides with the induced tangent stiffness matrix appearing in the rate-formulation for the Cauchy stress $\sigma$ and the Zaremba-Jaumann rate \cite{CSP2024}
\begin{equation}
	\label{eq:HZJ2}
	\frac{\DD^{\ZJ}}{\DD t} [\sigma] = \H^{\ZJ} (\sigma) . D \, , \quad \quad \H^{\ZJ} (\sigma) . D \colonequals \DD_B \sigma(B) . [B \, D + D \, B] \, .
\end{equation}
The condition $\det \H^{\ZJ} (\sigma) = 0$ is, in fact, equivalent to checking (cf.~\cite{CSP2024}) the local invertibility of $B \mapsto \sigma (B)$, i.e.
\begin{equation}
	\label{eq:detDsigma=0}
	\det \DD_B \sigma (B) = 0 \, .
\end{equation}
In the paper \cite{Wang1995a} the authors check, in this interpretation,
\begin{equation}
	\label{eq:detsym}
	\det (\sym \, \H^{\ZJ} (\sigma)) = 0
\end{equation}
being aware that $\H^{\ZJ}(\sigma)$ is, in general, not major symmetric and we note that \cite{leblond1992constitutive, agostino2024,secondorderwork2024}
\begin{equation}
	\label{eq:Leblond_sym_log}
	\langle \sym \, \H^{\ZJ} (\sigma) . D , D \rangle > 0 \quad \iff \quad \DD_{\log V} \sigma (\log V) \in \Sym_4^{++}(6) \quad \overset{\text{(Def)}}{\iff} \quad \text{TSTS-M$^{++}$}\, .
\end{equation}
Moreover by the Bendixson Theorem \cite{bendixson1902} \eqref{eq:Leblond_sym_log} implies $\det \H^{\ZJ}(\sigma)>0$, hence excluding \eqref{eq:detDsigma=0}.\\

\section{The induced tangent stiffness tensor $\H^{\ZJ}_{\tau}(\tau)$ for the Hencky energy}
\label{sec:H_ZJ_Hencky}
\subsection{The compressible Hencky energy}
Let us now present as an example of the foregoing development the Hencky model.
The Hencky energy (see \cite{Neff_Osterbrink_Martin_Hencky13, NeffGhibaLankeit, NeffGhibaPoly, xiao97_2, xiao97, xiao98_2, xiao98_1} for related literature) reads
	\begin{align}
	\WW_{\text{Hencky}}(V) = \widehat{\WW}_{\text{Hencky}} (\log V) = \mu \, \norm{\log V}^2 + \frac{\lambda}{2} \, \tr^2(\log V) = \frac{\mu}{2} \, \norm{\log B}^2 + \frac{\lambda}{4} \, (\log \det B)^2 \, .
	\end{align}
The corresponding Kirchhoff stress $\tau$ can be calculated using the Richter-representation \cite{richtertranslation, richter1948isotrope, richter1949hauptaufsatze, Richter50, Richter52,moreau1979lois,vallee1978lois,vallee2008dual}
	\begin{align}
	\tau = \DD_{\log V} \widehat \WW(\log V) = 2 \, \mu \, \log V + \lambda \, \tr(\log V) \, \id = \mu \, \log B + \frac{\lambda}{2} \, \tr(\log B) \, \id.
	\end{align}
For $\mu, \, 2\mu + 3 \lambda > 0$ Hill's inequality is satisfied (cf.~{\cite{hill1968constitutivea,hill1968constitutiveb,hill1970constitutive,sidoroff1974restrictions})
	\begin{align}
	\label{eq:Hills_inequality}
	\langle \tau(\log V_1) - \tau(\log V_2), \log V_1 - \log V_2 \rangle > 0 \qquad \forall \, V_1, V_2 \in \Sym^{++}(3), \quad V_1 \neq V_2
	\end{align}
and we even have
	\begin{align}
	\label{eq:derivative_tau}
	\DD_{\log V} \tau(\log V) = \DD^2_{\log V} \widehat \WW_{\text{Hencky}}(\log V) \in \Sym^{++}_4(6) \, ,
	\end{align}
since
\begin{equation}
	\langle \DD_{\log V} \, \tau (\log V). \, H , H \rangle = \langle 2 \, \mu H + \lambda \, \tr (H) \, \id , H \rangle
	= 2 \, \mu \, \Vert H \Vert^2 + \lambda \, \tr^2 (H) > 0
\end{equation}
and $\widehat{\WW}_{\text{Hencky}} (\log V)$ is convex in $\log V$.\\

We wish to directly determine the rate-formulation for the Kirchhoff stress $\tau$ with the Zaremba-Jaumann rate. From \cite{agostino2024} and \cite{CSP2024,pos_cor} we have 
	\begin{equation}
	\label{eqhenckytensor1}
	\begin{alignedat}{2}
	\frac{\DD^{\ZJ}}{\DD t}[\tau] &= \H^{\ZJ}_{\tau}(\tau).D = \frac{\DD^{\ZJ}}{\DD t}[\widehat \tau(\log B)] \\
	&= \DD_{\log B} \widehat \tau(\log B). \frac{\DD^{\ZJ}}{\DD t}[\log B] = \DD_{\log B} \widehat \tau(\log B).(\DD_B \log B.[B \, D + D \, B]) \\
	&= \mu \, \underbrace{\DD_B \log B.[D \, B + D \, B]}_{\text{self-adjoint}} + \frac{\lambda}{2} \, \underbrace{\tr(\DD_B \log B.[B \, D + D \, B])}_{= \, 2 \, D \, (\text{cf. \cite{CSP2024}})} \, \id \\
	&= \mu \, \left[\frac{\DD^{\ZJ}}{\DD t}[\log B]\right] + \frac{\lambda}{2} \, \tr \left(\frac{\DD^{\ZJ}}{\DD t}[\log B]\right) \, \id = \mu \, (\DD_B \log B. [B \, D + D \, B]) + \lambda \, \tr(D) \, \id \\
	&= 2 \, \mu \, \left[\DD_B \log B. \left[\frac{B \, D + D \, B}{2} \right] \right] + \lambda \, \tr(D) \, \id =: \H^{\ZJ}_{\tau}(\tau).D \, .
	\end{alignedat}
	\end{equation}
It is easy to see directly that $\H^{\ZJ}_{\tau}(\tau)$ is self-adjoint since the first part in $\eqref{eqhenckytensor1}_3$ is, and $\H^{\ZJ}_{\tau}(\tau)$ is obviously minor symmetric. Moreover, $\H^{\ZJ}_{\tau}(\tau)$ is positive-definite for $\mu, \, 2\mu + 3 \lambda > 0$ since (cf. \cite{CSP2024}, Appendix \ref{appendix})
	\begin{align}
	\langle \DD_B \log B.[B \, D + D \, B], D \rangle \ge c^+ \cdot \norm{D}^2.
	\end{align}
Looking back at \eqref{eq1.18} we see that the tangent stiffness tensor $\mathbb{D}(\tau)$ of the Abaqus\textsuperscript{\texttrademark} input format given by
\begin{equation}
	\mathbb{D}(\tau) \coloneqq \frac{1}{J} \, \H^{\ZJ}_{\tau}(\tau)
\end{equation}
is major symmetric and throughout positive definite, such that the Abaqus\textsuperscript{\texttrademark} and Ansys\textsuperscript{\texttrademark} "stability check"
is satisfied globally while the Hencky elasticity model shows unphysical response (e.g.~for purely volumetric response). Thus, the Abaqus\textsuperscript{\texttrademark} and Ansys\textsuperscript{\texttrademark} "stability check" must be used with great care when applied to the compressible case. It does neither imply that the elasticity model is physically sound nor that the FEM-calculations are stable and yielding a mesh - independent result.

\subsection{The incompressible Hencky energy}
In the incompressible case, $\det F = 1$, and we observe that due to
\begin{equation}
	0 = \log 1 = \log \det F = \log \det V = \tr (\log V) = \frac{1}{2} \tr (\log B)
\end{equation}
we have the "linear incompressibility contraint" in the logarithmic strain \quad $\tr (\log V) = 0$.\\
The energy now reads
\begin{equation}
	\WW_{\text{Hencky}}^{\text{inc}} (F) = \widehat{\WW}_{\text{Hencky}}^{\text{inc}} (\log V) = \mu \, \Vert \log V \Vert^2 \, .
\end{equation}
Repeating the calculations for the compressible case, we obtain
\begin{equation}
	\mathbb{D}_{\text{Hencky}}^{\text{inc}} (\tau) = \H_{\tau}^{\ZJ}(\tau)
\end{equation}
with
\begin{equation}
	\H_{\tau}^{\ZJ} (\tau). \, D = \mu \, \DD_B \log B. \, [BD + DB] \quad \quad \text{for} \quad \tr (D) = 0
\end{equation}
such that $\mathbb{D}_{\text{Hencky}}^{\text{inc}} (\tau)$ is positive definite.\footnote
{
	Abaqus\textsuperscript{\texttrademark} (cf.~\cite{abaqus}) is checking the so called ``Drucker-stability'' (cf.~\cite{abaqusTheory}) condition (nothing else than Hill's inequality \\ $\langle \tau (\log V_1) - \tau (\log V_2) \, , \, \log V_1 - \log V_2 \rangle > 0$) in terms of the Kirchhoff stress $\tau$ versus logarithmic strain $\log V$ for different deformation modes and performs a check on the stability of the material for six different forms of loading - uniaxial tension and compression, equibiaxial tension and compression and planar tension and compression - in a stretch range of $0.1 \le \lambda \le 10.0$ at intervals $\Delta \lambda = 0.01$. If an ``instability'' is found, Abaqus\textsuperscript{\texttrademark} issues a warning message and prints the lowest absolute value of $\varepsilon = \log V$ for which the instability is observed.
	In fact Abaqus\textsuperscript{\texttrademark} is using $\mathbb{D}$ as the input for the consistent Jacobian matrix $\texttt{D \!\!\!\!\! D \!\!\!\!\! S \!\!\!\!\! D \!\!\!\!\! D \!\!\!\!\! E}$ (cf.~\cite[eq.(50)]{palizi2020consistent}). The Abaqus\textsuperscript{\texttrademark} stability check is then $\sym \, \mathbb{D} = \mathbb{D} \in \Sym_4^{++}(6)$.
}

\section*{Funding}
This research was supported in part by the Natural Sciences and Engineering Research Council of Canada, through the NSERC Discovery Programme, grant number RGPIN-2024-04247 [S.~Federico].

\begingroup
\footnotesize
%
%
%
\printbibliography

@misc{abaqus,
author	= {ABAQUS},
title		= {ABAQUS 2016 online documentation},
note		= {Simulia, 2016},
}

@misc{abaqusTheory,
author = {ABAQUS},
title = {ABAQUS Theory Manual},
url = {https://classes.engineering.wustl.edu/2009/spring/mase5513/abaqus/docs/v6.6/books/stm/default.htm?startat=ch04s06ath124.html},
}

@misc{Ansys_Release,
author = {ANSYS},
url = {https://www.mm.bme.hu/~gyebro/files/ans_help_v182/ans_thry/thy_mat5.htm},
}

@misc{Ansys_help,
author = {ANSYS},
url = {https://ansyshelp.ansys.com/public/account/secured?returnurl=//////Views/Secured/corp/v242/en/ans_thry/thy_mat5.html},
}

@article{agostino2024,
author	= {M. V. d'Agostino and S. Holthausen and D. Bernardini and A. Sky and I. D. Ghiba and R. J. Martin and P. Neff},
title		= {A constitutive condition for idealized isotropic {C}auchy elasticity involving the logarithmic strain},
journal	= {to appear in Journal of Elasticity, arXiv:2409.01811v1},
year = {2024},
}

@article{Aubram2017,
author	= {D. Aubram.},
title		= {Notes on rate equations in nonlinear continuum mechanics.},
journal	= {to appear in Mathematics and Mechanics of Solids, arXiv:1709.10048},
year		= {2017},
}

@article{bellini2015,
author	= {C. Bellini and S. Federico.},
title		= {Green-{N}aghdi rate of the {K}irchhoff stress and deformation rate: the elasticity tensor.},
journal	= {Zeitschrift für Angewandte Mathematik und Physik},
volume	= {66},
number	= {3},
pages		= {1143--1163},
year		= {2015},
}

@article{bendixson1902,
author = {I. Bendixson},
title = {Sur les racines d'une équation fondamentale},
journal = {Acta Mathematica},
volume = {25},
pages = {359--365},
year = {1902}
}

@article{biezeno1928,
	author	= {C. B. Biezeno and H. Hencky},
	title		= {On the general theory of elastic stability.},
	journal	= {Koninklijke Akademie van Wettenschappen te Amsterdam},
	volume	= {31},
	pages		= {569--592},
	year		= {1928},
}

@book{ciarlet2022,
author 	= {P. G. Ciarlet.},
title 		= {Mathematical Elasticity, Volume I: Three-Dimensional Elasticity},
publisher 	= {Society for Industrial and Applied Mathematics, U.S.},
year 		= {2022},
}

@article{Cotter1955TENSORSAW,
	title={Tensors associated with time-dependent stress},
	author={B. A. Cotter and R. S. Rivlin},
	journal={Quarterly of Applied Mathematics},
	year={1955},
	volume={13},
	pages={1036--1041},
}

@article{doyle1956,
author	= {T. C. Doyle and J. L. Ericksen},
title		= {Nonlinear Elasticity},
journal	= {Advances in Applied Mechanics},
volume	= {4},
pages		= {53--115},
year		= {1956},
}

@article{fiala2009,
author	= {Z. Fiala.},
title		= {Is the logarithmic time derivative simply the {Z}aremba-{J}aumann derivative?},
journal	= {Engineering Mechanics, National Conference with International Participation},
volume	= {211},
pages		= {227--240},
year		= {2009},
}

@article{fiala2020objective,
author	= {Z. Fiala},
title		= {Objective time derivatives revised},
journal	= {Zeitschrift f{\"u}r angewandte Mathematik und Physik},
volume	= {71},
number	= {1},
pages		= {4},
year		= {2020},
publisher	= {Springer},
}

@article{govindjee1997,
author	= {S. Govindjee.},
title		= {Accuracy and stability for integration of {J}aumann stress rate equations in spinning bodies.},
journal	= {Engineering Computations},
volume	= {14},
number	= {1},
pages		= {14--30},
year		= {1997},
}

@Article{Green1965,
	author={Green, A. E. and Naghdi, P. M.},
	title={A general theory of an elastic-plastic continuum},
	journal={Archive for Rational Mechanics and Analysis},
	year={1965},
	volume={18},
	number={4},
	pages={251--281},
	issn={1432-0673},
	doi={10.1007/BF00251666},
}

@article{Green_McInnis_1967,
	author={Green, A. E. and McInnis, B. C.}, 
	title={Generalized hypo-elasticity}, 
	journal={Proceedings of the Royal Society of Edinburgh. Section A. Mathematical and Physical Sciences}, 
	volume={67}, 
	number={3}, 
	pages={220–230},
	year={1967}, 
	DOI={10.1017/S0080454100008074},
}

@article{hill1968constitutivea,
author	= {R. Hill.},
title		= {On constitutive inequalities for simple materials\,-\, {I}.},
journal	= {Journal of the Mechanics and Physics of Solids},
volume	= {16},
number	= {4},
pages		= {229--242},
year		= {1968},
}

@article{hill1968constitutiveb,
author	= {R. Hill.},
title		= {On constitutive inequalities for simple materials\,-\,{II}.},
journal	= {Journal of the Mechanics and Physics of Solids},
volume	= {16},
number	= {5},
pages		= {315--322},
year		= {1968},
}

@article{hill1970constitutive,
author	= {R. Hill.},
title		= {Constitutive inequalities for isotropic elastic solids under finite strain.},
journal	= {Proceedings of the Royal Society of London. Series A, Mathematical and Physical Sciences},
volume	= {314},
number	= {1519},
pages		= {457--472},
year		= {1970},
}

@article{jaumann1911geschlossenes,
title		= {{G}eschlossenes {S}ystem physikalischer und chemischer {D}ifferentialgesetze},
author	= {G. Jaumann},
journal	= {Sitzungsberichte der Mathematisch-Naturwissenschaftlichen Classe der Kaiserlichen Akademie der Wissenschaften Wien 2a},
volume	= {120},
pages		= {385--530},
year		= {1911},
}

@book{jaumann1905,
	author 	= {G. Jaumann},
	title 		= {{Die Grundlagen der Bewegungslehre von einem modernen Standpunkte aus dargestellt.}},
	publisher 	= {Johann Ambrosius Barth, Leipzig},
	year 		= {1905},
}

@article{ji2013,
title		= {On the importance of work-conjugacy and objective stress rates in finite deformation incremental finite element analysis.},
author	= {W. Ji and A. M. Waas and Z. P. Bazant.},
journal	= {Journal of Applied Mechanics},
volume	= {80},
number	= {4},
pages		= {041024},
year		= {2013},
}

@article{kolev2024objective,
title		= {Objective rates as covariant derivatives on the manifold of Riemannian metrics},
author	= {B. Kolev and R. Desmorat},
journal	= {Archive for Rational Mechanics and Analysis},
volume	= {248},
number	= {4},
pages		= {66},
year		= {2024},
publisher	= {Springer},
}

@article{korobeynikov2023,
author	= {S. N. Korobeynikov.},
title		= {Families of {H}ooke-like isotropic hyperelastic material models and their rate formulations.},
journal	= {Archive of Applied Mechanics},
volume	= {93},
pages		= {3863--3893},
year		= {2023},
}

@article{korobeynikov2018,
author	= {S. N. Korobeynikov.},
title		= {Basis-free expressions for families of objective strain tensors, their rates, and conjugate stress tensors.},
journal	= {Acta Mechanica},
volume	= {229},
pages		= {1061--1098},
year		= {2018},
}

@article{LankeitNeffNakatsukasa,
author 	= {J. Lankeit and P. Neff and Y. Nakatsukasa},
title 		= {The minimization of matrix logarithms: {O}n a fundamental property of the unitary polar factor},
journal 	= {Linear Algebra and its Applications},
volume 	= {449},
number 	= {0},
pages 	= {28--42},
year 		= {2014},
doi 		= {http://dx.doi.org/10.1016/j.laa.2014.02.012},
}

@article{leblond1992constitutive,
author	= {J. B. Leblond.},
title		= {A constitutive inequality for hyperelastic materials in finite strain.},
journal	= {European Journal of Mechanics A/Solids},
number	= {4},
volume	= {11},
pages		= {447--466},
year		= {1992},
}

@article{moreau1979lois,
author	= {J. J. Moreau.},
title		= {Lois d’{\'e}lasticit{\'e} en grande d{\'e}formation, s{\'e}minaire d’analyse convexe, expos{\'e} no 12.},
journal	= {Universit{\'e} de Montpellier II},
year		= {1979},
}

@article{naghdi1961,
	author = {P. M. Naghdi and W. L. Wainwright},
	journal = {Quarterly of Applied Mathematics},
	publisher = {Brown University},
	ISSN = {0033569X, 15524485},
	title = {On the time derivative of tensors in mechanics of continua},
	volume = {19},
	number = {2},
	pages = {95--109},
	year = {1961}
}

@article{NeffGhibaLankeit,
author	= {P. Neff and I. D. Ghiba and J. Lankeit.},
title		= {The exponentiated {H}encky-logarithmic strain energy. {P}art {I}: {C}onstitutive issues and rank--one convexity.},
journal	= {Journal of Elasticity},
volume	= {121},
pages		= {143--234},
year		= {2015},
}

@article{NeffGhibaPoly,
author	= {P. Neff and I. D. Ghiba and J. Lankeit and R. J. Martin and D.J. Steigmann.},
title		= {The exponentiated {H}encky-logarithmic strain energy. {P}art {II}: {C}oercivity, planar polyconvexity and existence of minimizers.},
journal	= {Zeitschrift für Angewandte Mathematik und Physik},
volume	= {66},
pages		= {1671--1693},
year		= {2015},
}

@article{pos_cor,
author = {P. Neff and S. Holthausen and S. N. Korobeynikov and I. D. Ghiba and R. J. Martin.},
title = {A natural requirement for objective corotational rates - on structure preserving corotational rates.},
journal = {to appear in Acta Mechanica, arXiv:2409.19707v1},
year = {2024},
}

@article{secondorderwork2024,
author = {P. Neff and N. J. Husemann and A. S. Nguetcho Tchakoutio and S. N. Korobeynikov and R. J. Martin},
title = {The corotational stability postulate: positive incremental Cauchy stress moduli for diagonal, homogeneous deformations in isotropic nonlinear elasticity},
journal = {arXiv:2411.12552},
year = {2024}
}

@article{Neff_Nagatsukasa_logpolar13,
author 	= {P. Neff and Y. Nakatsukasa and A. Fischle},
title		= {A logarithmic minimization property of the unitary polar factor in the spectral norm and the {Frobenius} matrix norm},
journal	= {SIAM Journal on Matrix Analysis and Applications},
year 		= {2014},
volume 	= {35},
pages 	= {1132--1154},
}

@article{Neff_Osterbrink_Martin_Hencky13,
author	= {P. Neff and B. Eidel and R. J. Martin.},
title		= {Geometry of logarithmic strain measures in solid mechanics.},
journal	= {Archive for Rational Mechanics and Analysis},
volume	= {222},
pages		= {507--572},
year		= {2016},
}

@article{Neffpolardecomp,
author	= {P. Neff and J. Lankeit and A. Madeo.},
title		= {On {G}rioli's minimum property and its relation to {C}auchy's polar decomposition.},
journal	= {International Journal of Engineering Science},
volume	= {80},
pages		= {209--217},
year		= {2014},
}

@article{Noll55,
author	= {W. Noll.},
title		= {On the continuity of the solid and fluid states.},
journal	= {Journal of Rational Mechanics and Analysis},
volume	= {4},
pages		= {3--81},
year		= {1955},
}

@book{Ogden83,
author 	= {R.W. Ogden.},
title 		= {Non-Linear Elastic Deformations.},
publisher 	= {Mathematics and its Applications. Ellis Horwood, Chichester, 1st edition},
year 		= {1983},
}

@article{oldroyd1950,
	author	= {J. G. Oldroyd},
	title		= {On the formulation of rheological equation of state.},
	journal	= {Proceedings of the Royal Society of London. Series A, Mathematical and Physical Sciences.},
	volume	= {200},
	pages		= {523--541},
	year		= {1950},
}

@article{palizi2020consistent,
title		= {Consistent numerical implementation of hypoelastic constitutive models},
author	= {M. Palizi and S. Federico and S. Adeeb},
journal	= {Zeitschrift f{\"u}r angewandte Mathematik und Physik},
volume	= {71},
pages		= {1--23},
year		= {2020},
publisher	= {Springer},
}

@article{pinsky1983,
author 	= {P. M. Pinsky and M. Ortiz and K. S. Pister.},
title 		= {Numerical integration of rate constitutive equations in finite deformation analysis.},
journal 	= {Computer Methods in Applied Mechanics and Engineering},
volume 	= {40},
number 	= {2},
pages 	= {137-158},
year 		= {1983},
}

@article{richter1948isotrope,
author	= {H. Richter.},
title		= {Das isotrope {E}lastizitätsgesetz.},
journal	= {Zeitschrift für Angewandte Mathematik und Mechanik},
volume	= {28},
number	= {7-8},
pages		= {205--209},
year		= {1948},
}

@article{richter1949hauptaufsatze,
author	= {H. Richter.},
title		= {{Verzerrungstensor, Verzerrungsdeviator und Spannungstensor bei endlichen Formänderungen}},
journal	= {Zeitschrift für Angewandte Mathematik und Mechanik},
volume	= {29},
number	= {3},
pages		= {65--75},
year		= {1949},
}

@article{richtertranslation,
author	= {K. Graban and E. Schweickert and R. J. Martin and P. Neff},
title		= {A commented translation of {H}ans {R}ichter's early work ``{T}he isotropic law of elasticity''},
journal	= {Mathematics and Mechanics of Solids},
volume	= {24},
number	= {8},
pages		= {2649--2660},
year		= {2019},
}

@article{Richter50,
author	= {H. Richter.},
title		= {{Zum Logarithmus einer Matrix.}},
journal	= {Archiv der Mathematik},
volume	= {2},
pages		= {360--363},
year		= {1950},
}

@article{Richter52,
author	= {H. Richter.},
title		= {{Zur Elastizitätstheorie endlicher Verformungen.}},
journal	= {Mathematische Nachrichten},
volume	= {8},
pages		= {65--73},
year		= {1952},
}

@article{sidoroff1974restrictions,
author	= {R. Sidoroff.},
title		= {Sur les restrictions \`{a} imposer \`{a} l'\'{e}nergie de d\'{e}formation d'un mat\'{e}riau hyper\'{e}lastique.},
journal	= {Comptes Rendus de l'Académie des Sciences Paris},
volume	= {279},
pages		= {379--382},
year		= {1974},
}

@article{truesdellremarks,
author	= {C. A. Truesdell.},
title		= {Remarks on hypo-elasticity},
journal	= {Journal of Research of the National Bureau of Standards, Section B: Mathematics and Mathematical Physics},
volume	= {67B},
number	= {3},
year		= {1963},
}

@article{vallee2008dual,
author	= {C. Vall{\'e}e and D. Fortun{\'e} and C. Lerintiu.},
title		= {On the dual variable of the {Cauchy} stress tensor in isotropic finite hyperelasticity.},
journal	= {Comptes Rendus Mecanique},
volume	= {336},
number	= {11},
pages		= {851--855},
year		= {2008},
}

@article{vallee1978lois,
author	= {C. Vall\'{e}e.},
title		= {Lois de comportement \'{e}lastique isotropes en grandes d\'{e}formations.},
journal	= {International Journal of Engineering Science.},
volume	= {16},
number	= {7},
pages		= {451--457},
year		= {1978},
}

@article{xiao97,
author	= {H. Xiao and O. T. Bruhns and A. Meyers.},
title		= {Logarithmic strain, logarithmic spin and logarithmic rate.},
journal	= {Acta Mechanica},
volume	= {124},
pages		= {89--105},
year		= {1997},
}

@article{xiao97_2,
author	= {H. Xiao and O. T. Bruhns and A. Meyers.},
title		= {Hypo-elasticity model based upon the logarithmic stress rate.},
journal	= {Journal of Elasticity},
volume	= {47},
pages		= {51--68},
year		= {1997},
}

@article{xiao98_1,
author	= {H. Xiao and O. T. Bruhns and A. Meyers.},
title		= {Strain rates and material spins},
journal	= {Journal of Elasticity},
volume	= {52},
pages		= {1--41},
year		= {1998},
}

@article{xiao98_2,
author	= {H. Xiao and O. T. Bruhns and A. Meyers.},
title		= {Objective corotational rates and unified work-conjugacy relation between Eulerian and Lagrangean strain and stress measures.},
journal	= {Archives of Mechanics},
volume	= {50},
pages		= {1015--1045},
year		= {1998},
}

@article{zaremba1903forme,
author	= {S. Zaremba.},
title		= {Sur une forme perfectionnée de la théorie de la relaxation.},
journal	= {Bulletin International de l'Academie des Sciences de Cracovie},
pages		= {534--614},
year		= {1903},
}

@article{CSP2024,
author = {P. Neff. and S. Holthausen and M. V. d'Agostino and D. Bernardini and A. Sky and I. D. Ghiba and R. J. Martin},
title  = {Hypo-elasticity, {C}auchy-elasticity, corotational stability and monotonicity in the logarithmic strain.},
journal	= {submitted arXiv:2409.20051v1},
year		= {2024},
}

@BOOK{Antman1995a,
  author =       "S. S. Antman",
  title =        "Nonlinear Problems of Elasticity",
  publisher =    "Springer-Verlag",
  year =         "1995",
  address =      "New York, USA"
}

@ARTICLE{Ateshian2010a,
  author =       "G. A. Ateshian and J. A. Weiss",
  title =        "Anisotropic hydraulic permeability under finite deformation",
  journal =      {Journal of Biomechanical Engineering},
  year =         "2010",
  volume =       "132",
  pages =        "111004"
}

@BOOK{Bonet2008a,
  author =       "J. Bonet and R. D. Wood",
  title =        "Nonlinear Continuum Mechanics for Finite Element Analysis (Second Edition)",
  publisher =    "Cambridge University Press",
  year =         "2008",
  address =      "Cambridge, UK"
}

@article{Brannon1998a,
  title={Caveats concerning conjugate stress and strain measures for frame indifferent anisotropic elasticity},
  author={R. M. Brannon},
  journal={Acta Mechanica},
  volume={129},
  pages={107-116},
  year={1998}
}

@ARTICLE{Curnier1995a,
  author =       "A. Curnier and Q.-C. He and P. Zysset",
  title =        "Conewise linear elastic materials",
  journal =      {Journal of Elasticity},
  year =         "1995",
  volume =       "37",
  pages =        "1-38"
}

@article{DelPiero1979a,
  title={Some properties of the set of fourth-order tensors, with application to elasticity},
  author={G. {Del~Piero}},
  journal={Journal of Elasticity},
  volume={9},
  pages={245-261},
  year={1979}
}

@BOOK{Eringen1980a,
  author =       "A. C. Eringen",
  title =        "Mechanics of Continua",
  publisher =    "Robert E. Krieger Publishing Company",
  year =         "1980",
  address =      "Huntington, NY, USA"
}

@ARTICLE{Federico2012a,
  author =       "S. Federico and A. Grillo",
  title =        "Elasticity and permeability of porous fibre-reinforced materials
                  under large deformations",
  journal =      {Mechanics of Materials},
  year =         "2012",
  volume =       "44",
  pages =        "58-71"
}

@ARTICLE{Federico2012b,
  author =       "S. Federico",
  title =        "Covariant formulation of the tensor algebra of non-linear elasticity",
  journal =      {International Journal of Non-Linear Mechanics},
  year =         "2012",
  volume =       "47",
  pages =        "273-284"
}

@ARTICLE{Federico2022a,
  author =       "S. Federico",
  title =        "The {T}ruesdell rate in continuum mechanics",
  year =         "2022",
  journal =      {Zeitschrift f\"ur Angewandte Mathematik und Physik},
  volume =       "73",
  pages =        "109"
}

@ARTICLE{Gasser2006b,
  author =       "T. C. Gasser and R. W. Ogden and G. A. Holzapfel",
  title =        "Hyperelastic modelling of arterial layers with distributed collagen fibre orientations",
  journal =      {Journal of the Royal Society Interface},
  year =         "2006",
  volume =       "3",
  pages =        "15-35"
}

@BOOK{Green1968a,
  author =       "A. E. Green and W. Zerna",
  title =        "Theoretical Elasticity",
  publisher =    "Clarendon Press",
  year =         "1968",
  address =      "Oxford, UK",
  edition =      "2nd Edition"
}

@ARTICLE{Grillo2018a,
  author = "A. Grillo and M. Carfagna and S. Federico",
  title  = "An {A}llen-{C}ahn approach to the remodelling of fibre-reinforced anisotropic materials",
  journal =      {Journal of Engineering Mathematics},
  year =         "2018",
  volume =       "109",
  pages =        "139-172"
}

@BOOK{Holzapfel2000a,
  author =       "G. A. Holzapfel",
  title =        "Nonlinear Solid Mechanics. A Continuum Approach for Engineering",
  publisher =    "John Wiley \& Sons ",
  year =         "2000",
  address =      "Chichester, UK"
}

@ARTICLE{Hughes1977a,
  AUTHOR = "T. J. R. Hughes and J. E. Marsden",
  TITLE  = "Some applications of geometry in continuum mechanics",
  JOURNAL= {Reports on Mathematical Physics},
  YEAR   = "1977",
  VOLUME = "12",
  PAGES  = "35-44"
}

@ARTICLE{Jog2006a,
  author =       "C. S. Jog",
  title =        "A concise proof of the representation theorem for fourth-order isotropic tensors",
  journal =      {Journal of Elasticity},
  year =         "2006",
  volume =       "85",
  pages =        "119-124"
}

@article{Li1998a,
  title={Ab initio investigation of the elasticity and stability of aluminium},
  author={W. X. Li and T. C. Wang},
  journal={Journal of Physics: Condensed Matter},
  volume={10},
  number={43},
  pages={9889-9904},
  year={1998}
}

@article{Lucchesi1992a,
  title={Materials with elastic range: {A} theory with a view toward applications. {P}art {III}: {A}pproximate constitutive relations},
  author={M. Lucchesi and D. R. Owen and P. Podio-{G}uidugli},
  journal={Archive for Rational Mechanics and Analysis},
  volume={117},
  pages={53--96},
  year={1992}
}

@BOOK{Marsden1983a,
  author =       "J. E. Marsden and T. J. R. Hughes",
  title =        "Mathematical Foundations of Elasticity",
  publisher =    "Prentice-Hall",
  year =         "1983",
  address =      "Englewood Cliff, NJ, USA"
}

@article{Morris2000a,
  title={The internal stability of an elastic solid},
  author={J. W. {Morris Jr.} and C. R. Krenn},
  journal={Philosophical Magazine A},
  volume={80},
  number={12},
  pages={2827-2840},
  year={2000}
}

@BOOK{Wang1973a,
  author =       "C.-C. Wang and C. Truesdell",
  title =        "Introduction to Rational Elasticity",
  publisher =    "Noordhoff",
  year =         "1973",
  address =      "Leyden, Nethelands"
}

@article{Wang1995a,
  title={Mechanical instabilities of homogeneous crystals},
  author={J. H. Wang and J. Li and S. Yip and S. Phillpot and D. Wolf},
  journal={Physical Review B},
  volume={52},
  number={17},
  pages={12627-12635},
  year={1995},
  publisher={APS}
}

@BOOK{Washizu1975a,
  author =       "K. Washizu",
  title =        "Variational Methods in Elasticity and Plasticity",
  publisher =    "Pergamon Press ",
  year =         "1975",
  address =      "Oxford, UK",
  edition =      "2nd Edition"
}
\endgroup
\begin{appendix}

\section{Appendix}
\label{appendix}
\subsection{Notation} \label{appendixnotation}
\textbf{The deformation $\varphi(x,t)$, the material time derivative $\frac{\DD}{\DD t}$ and the partial time derivative $\partial_t$} \\
\\
In accordance with \cite{Marsden1983a} we agree on the following convention regarding an elastic deformation $\varphi$ and time derivatives of material quantities:

Given two sets $\Omega, \Omega_{\xi} \subset \R^3$ we denote by $\varphi\colon\Omega \to \Omega_{\xi}, x \mapsto \varphi(x) = \xi$ the deformation from the \emph{reference configuration} $\Omega$ to the \emph{current configuration} $\Omega_{\xi}$. A \emph{motion} of $\Omega$ is a time-dependent family of deformations, written $\xi = \varphi(x,t)$. The \emph{velocity} of the point $x \in \Omega$ is defined by $\overline{V}(x,t) = \partial_t \varphi(x,t)$ and describes a vector emanating from the point $\xi = \varphi(x,t)$ (see also Figure \ref{yfig1}). Similarly, the velocity viewed as a function of $\xi \in \Omega_{\xi}$ is denoted by $v(\xi,t)$. 

\begin{figure}[h!]
	\begin{center}		
		\begin{minipage}[h!]{0.8\linewidth}
			\centering
			\hspace*{-40pt}
			\includegraphics[scale=0.4]{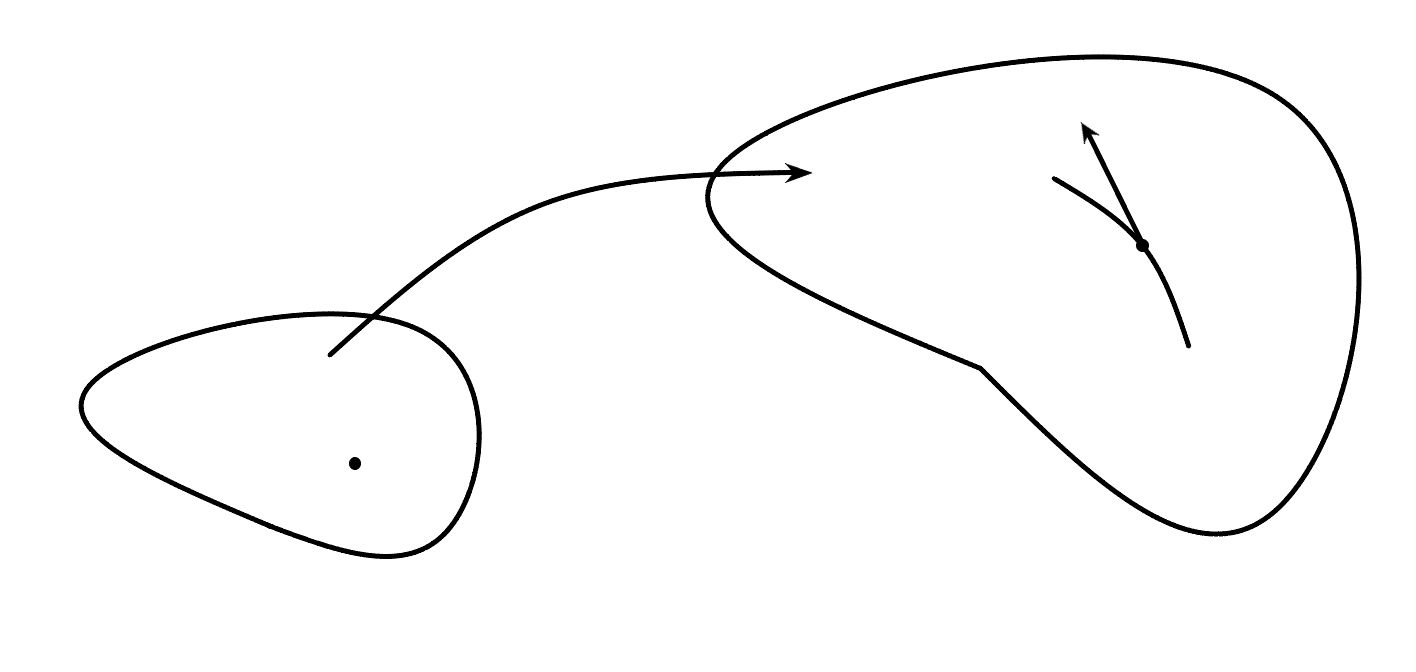}
			\put(-40,30){\footnotesize $\Omega_\xi$}
			\put(-340,25){\footnotesize $\Omega_x$}
			\put(-316,64){\footnotesize $x$}
			\put(-280,148){\footnotesize $\varphi(x,t)$}
			\put(-104,168){\footnotesize $\overline V(x,t) \!=\! v(\xi,t)$}
			\put(-88,119){\footnotesize $\xi$}
			\put(-105,90){\footnotesize curve $t \mapsto \varphi(x,t)$}
			\put(-85,80){\footnotesize  for $x$ fixed}
		\end{minipage} 
		\caption{Illustration of the deformation $\varphi(x,t): \Omega_x \to \Omega_{\xi}$ and the velocity $\overline V(x,t) = v(\xi,t)$.}
		\label{yfig1}
	\end{center}
\end{figure}

Considering an arbitrary material quantity $Q(x,t)$ on $\Omega$, equivalently represented by $q(\xi,t)$ on $\Omega_\xi$, we obtain by the chain rule for the time derivative of $Q(x,t)$
\begin{align}
	\frac{\DD}{\DD t}q(\xi,t) \colonequals \frac{\dif}{\dif t}[Q(x,t)] = \DD_\xi q(\xi,t).v + \partial_t q(\xi,t) \, .
\end{align}
Since it is always possible to view any material quantity $Q(x,t) = q(\xi,t)$ from two different angles, namely by holding $x$ or $\xi$ fixed, we agree to write
\begin{itemize}
	\item $\dot q \colonequals \dd \frac{\DD}{\DD t}[q]$ for the material (substantial) derivative of $q$ with respect to $t$ holding $x$ fixed and
	\item $\partial_t q$ for the derivative of $q$ with respect to $t$ holding $\xi$ fixed.
\end{itemize}
For example, we obtain the velocity gradient $L := \DD_\xi v(\xi,t)$ by
\begin{align}
	L = \DD_\xi v(\xi,t) = \DD_\xi \overline V(x,t) \overset{\text{def}}&{=} \DD_\xi \frac{\dif}{\dif t} \varphi(x,t) = \DD_{\xi} \partial_t \varphi(\varphi^{-1}(\xi,t),t) = \partial_t \DD_x \varphi(\varphi^{-1}(\xi,t),t) \, \DD_\xi \big(\varphi^{-1}(\xi,t)\big) \notag \\
	&=  \partial_t \DD_x \varphi(\varphi^{-1}(\xi,t),t) \, (\DD_x \varphi)^{-1}(\varphi^{-1}(\xi,t),t) = \dot F(x,t) \, F^{-1}(x,t) = L \, ,
\end{align}
where we used that $\partial_t = \frac{\dif}{\dif t} = \frac{\DD}{\DD t}$ are all the same, if $x$ is fixed. \\
\\
As another example, when determining a corotational rate $\frac{\DD^{\circ}}{\DD t}$ we write
\begin{align}
	\frac{\DD^{\circ}}{\DD t}[\sigma] = \frac{\DD}{\DD t}[\sigma] + \sigma \, \Omega^{\circ} - \Omega^{\circ} \, \sigma = \dot \sigma + \sigma \, \Omega^{\circ} - \Omega^{\circ} \, \sigma \, .
\end{align}
However, if we solely work on the current configuration, i.e.~holding $\xi$ fixed, we write $\partial_t v$ for the time-derivative of the velocity (or any quantity in general). \\
\\
\noindent \textbf{Inner product} \\
\\
For $a,b\in\R^n$ we let $\langle {a},{b}\rangle_{\R^n}$  denote the scalar product on $\R^n$ with associated vector norm $\norm{a}_{\R^n}^2=\langle {a},{a}\rangle_{\R^n}$. We denote by $\R^{n\times n}$ the set of real $n\times n$ second-order tensors, written with capital letters. The standard Euclidean scalar product on $\R^{n\times n}$ is given by
$\langle {X},{Y}\rangle_{\R^{n\times n}}=\tr{(X Y^T)}$, where the superscript $^T$ is used to denote transposition. Thus the Frobenius tensor norm is $\norm{X}^2=\langle {X},{X}\rangle_{\R^{n\times n}}$, where we usually omit the subscript $\R^{n\times n}$ in writing the Frobenius tensor norm. The identity tensor on $\R^{n\times n}$ will be denoted by $\id$, so that $\tr{(X)}=\langle {X},{\id}\rangle$. \\
\\
\noindent \textbf{Frequently used spaces} 
\begin{itemize}
	\item $\Sym(n), \rm \Sym^+(n)$ and $\Sym^{++}(n)$ denote the symmetric, positive semi-definite symmetric and positive definite symmetric second-order tensors respectively.
	\item ${\rm GL}(n)\colonequals\{X\in\R^{n\times n}\;|\det{X}\neq 0\}$ denotes the general linear group.
	\item ${\rm GL}^+(n)\colonequals\{X\in\R^{n\times n}\;|\det{X}>0\}$ is the group of invertible matrices with positive determinant.
	\item $\mathrm{O}(n)\colonequals\{X\in {\rm GL}(n)\;|\;X^TX=\id\}$.
	\item ${\rm SO}(n)\colonequals\{X\in {\rm GL}(n,\R)\;|\; X^T X=\id,\;\det{X}=1\}$.
	\item $\mathfrak{so}(3)\colonequals\{X\in\mathbb{R}^{3\times3}\;|\;X^T=-X\}$ is the Lie-algebra of skew symmetric tensors.
	\item The set of positive real numbers is denoted by $\R_+\colonequals(0,\infty)$, while $\overline{\R}_+=\R_+\cup \{\infty\}$.
\end{itemize}
\textbf{Frequently used tensors}
\begin{itemize}
	\item $F = \DD \varphi(x,t)$ is the Fréchet derivative (Jacobean) of the deformation $\varphi(\,,t)\colon\Omega_x \to \Omega_{\xi} \subset \R^3$. $\varphi(x,t)$ is usually assumed to be a diffeomorphism at every time $t \ge 0$ so that the inverse mapping $\varphi^{-1}(\,,t)\colon\Omega_{\xi} \to \Omega_x$ exists.
	\item $C=F^T \, F$ is the right Cauchy-Green strain tensor.
	\item $B=F\, F^T$ is the left Cauchy-Green (or Finger) strain tensor.
	\item $U = \sqrt{F^T \, F} \in \Sym^{++}(3)$ is the right stretch tensor, i.e.~the unique element of ${\rm Sym}^{++}(3)$ with $U^2=C$.
	\item $V = \sqrt{F \, F^T} \in \Sym^{++}(3)$ is the left stretch tensor, i.e.~the unique element of ${\rm Sym}^{++}(3)$ with $V^2=B$.
	\item $\log V = \frac12 \, \log B$ is the spatial logarithmic strain tensor or Hencky strain.
	\item $L = \dot F \, F^{-1} = \DD_\xi v(\xi)$ is the spatial velocity gradient.
	\item $v = \frac{\DD}{\DD t} \varphi(x, t)$ denotes the Eulerian velocity.
	\item $D = \sym \, L$ is the spatial rate of deformation, the Eulerian strain rate tensor.
	\item $W = \sk \, L$ is the vorticity or spin tensor.
	\item We also have the polar decomposition $F = R \, U = V R \in {\rm GL}^+(3)$ with an orthogonal matrix $R \in \OO(3)$ (cf. Neff et al.~\cite{Neffpolardecomp}), see also \cite{LankeitNeffNakatsukasa,Neff_Nagatsukasa_logpolar13}.
\end{itemize}
\noindent \textbf{Frequently used rates}
\begin{multicols}{2}
	\begin{itemize}
		\item $\dd \frac{\DD^{\sharp}}{\DD t}$ denotes an arbitrary objective derivative,
		\item $\dd \frac{\DD^{\circ}}{\DD t} \begin{array}{l} \text{denotes an arbitrary corotational} \\ \text{derivative,} \end{array}$
		\item $\dd \frac{\DD^{\ZJ}}{\DD t} \begin{array}{l} \text{denotes the Zaremba-Jaumann} \\ \text{derivative,} \end{array}$
		\item $\dd \frac{\DD^{\GN}}{\DD t}$ denotes the Green-Naghdi derivative.
		\item $\dd \frac{\DD^{\log}}{\DD t}$ denotes the logarithmic derivative.
		\item $\dd \frac{\DD}{\DD t}$ denotes the material derivative.
	\end{itemize}
\end{multicols}
\noindent \textbf{Tensor domains} \\
\\
Denoting the reference configuration by $\Omega_x$ with tangential space $T_x \Omega_x$ and the current/spatial configuration by $\Omega_\xi$ with tangential space $T_\xi \Omega_\xi$ as well as $\varphi(x) = \xi$, we have the following relations (see also Figure \ref{yfig2}):

\begin{figure}[h!]
	\begin{center}		
		\begin{minipage}[h!]{0.8\linewidth}
			\centering
			\hspace*{-80pt}
			\includegraphics[scale=0.5]{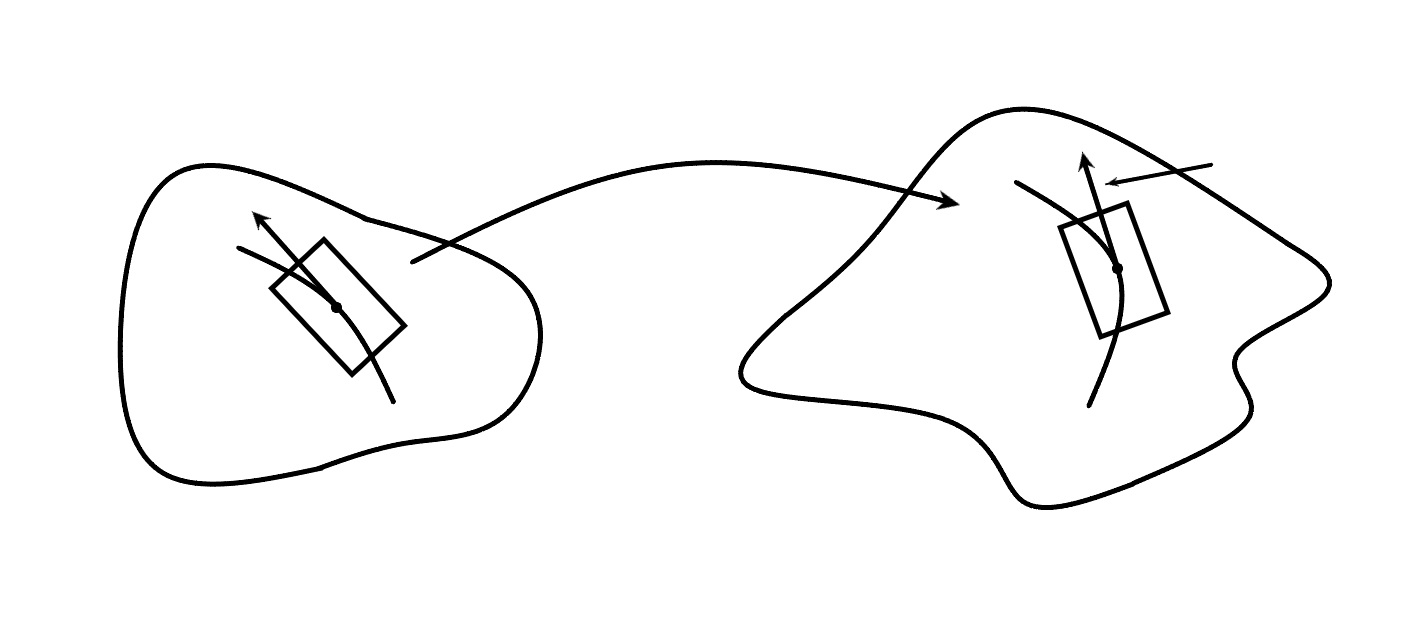}
			\put(-100,45){\footnotesize $\Omega_\xi$}
			\put(-390,55){\footnotesize $\Omega_x$}
			\put(-412,115){\footnotesize $x$}
			\put(-430,155){\footnotesize $\dot \gamma(0)$}
			\put(-377,105){\footnotesize $T_x \Omega_x$}
			\put(-383,85){\footnotesize $\gamma(s)$}
			\put(-280,183){\footnotesize $\varphi(x,t_0)$}
			\put(-120,131){\footnotesize $\xi$}
			\put(-78,181){\footnotesize $\frac{\dif}{\dif s}\varphi(\gamma(s),t_0)\bigg\vert_{s=0}$}
			\put(-88,119){\footnotesize $T_\xi \Omega_\xi$}
			\put(-115,88){\footnotesize $\varphi(\gamma(s),t_0)$}
		\end{minipage} 
		\caption{Illustration of the curve $s \mapsto \varphi(\gamma(s),t_0), \; \gamma(0) = x$ for a fixed time $t = t_0$ with vector field \break $s \mapsto \frac{\dif}{\dif s} \varphi(\gamma(s),t) \in T_\xi \Omega_\xi$.}
		\label{yfig2}
	\end{center}
\end{figure}
\begin{multicols}{3}
	\begin{itemize}
		\item $F\colon T_x \Omega_x \to T_\xi \Omega_\xi$
		\item $R\colon T_x \Omega_x \to T_\xi \Omega \xi$
		\item $F^T\colon T_\xi \Omega_\xi \to T_x \Omega_x$
		\item $R^T\colon T_\xi \Omega_\xi \to T_x \Omega_x$
		\item $C = F^T \, F\colon T_x \Omega_x \to T_x \Omega_x$
		\item $B = F \, F^T\colon T_\xi \Omega_\xi \to T_\xi \Omega_\xi$
		\item $\sigma\colon T_\xi \Omega_\xi \to T_\xi \Omega_\xi$
		\item $\tau\colon T_\xi \Omega_\xi \to T_\xi \Omega_\xi$
		\item $S_2 \colon T_x \Omega_x \to T_x \Omega_x$
		\item $S_1 \colon T_x \Omega_x \to T_\xi \Omega_\xi$
		\item $R^T \, \sigma \, R\colon T_x \Omega_x \to T_x \Omega_x$
	\end{itemize}
\end{multicols}
\noindent \textbf{The strain energy function $\WW(F)$} \\
\\
We are only concerned with rotationally symmetric functions $\WW(F)$ (objective and isotropic), i.e.
\begin{equation*}
	\WW(F)={\WW}(Q_1^T\, F\, Q_2), \qquad \forall \, F \in {\rm GL}^+(3), \qquad  Q_1 , Q_2 \in {\rm SO}(3).
\end{equation*}
\textbf{List of additional definitions and useful identities}
\begin{itemize}
	\item For two metric spaces $X, Y$ and a linear map $L\colon X \to Y$ with argument $v \in X$ we write $L.v\colonequals L(v)$. This applies to a second-order tensor $A$ and a vector $v$ as $A.v$ as well as a fourth-order tensor $\C$ and a second-order tensor $H$ as $\C.H$. Sometimes we may emphasise the usual matrix product of two second-order tensors $A, B$ as $A \cdot B$.
	\item We define $J = \det{F}$ and denote by $\Cof \, X = (\det X)X^{-T}$ the cofactor of a matrix in ${\rm GL}^{+}(3)$.
	\item We define $\sym X = \frac12 \, (X + X^T)$ and $\sk X = \frac12 \, (X - X^T)$ as well as $\dev X = X - \frac13 \, \tr(X) \, \id$.
	\item For all vectors $\xi,\eta\in\R^3$ we have the tensor product $(\xi\otimes\eta)_{ij}=\xi_i\,\eta_j$.
	\item $S_1=\DD_F \WW(F) = \sigma \, \Cof F$ is the non-symmetric first Piola-Kirchhoff stress tensor.
	\item $S_2=F^{-1}S_1=2\,\DD_C \widetilde{\WW}(C)$ is the symmetric second  Piola-Kirchhoff stress tensor.
	\item $\sigma=\frac{1}{J}\,  S_1\, F^T=\frac{1}{J}\,  F\,S_2\, F^T=\frac{2}{J}\DD_B \widetilde{\WW}(B)\, B=\frac{1}{J}\DD_V \widetilde{\WW}(V)\, V = \frac{1}{J} \, \DD_{\log V} \widehat \WW(\log V)$ is the symmetric Cauchy stress tensor.
	\item $\sigma = \frac{1}{J} \, F\, S_2 \, F^T = \frac{2}{J} \, F \, \DD_C \widetilde{\WW}(C) \, F^T$ is the ``\emph{Doyle-Ericksen formula}'' \cite{doyle1956}.
	\item For $\sigma\colon \Sym(3) \to \Sym(3)$ we denote by $\DD_B \sigma(B)$ with $\sigma(B+H) = \sigma(B) + \DD_B \sigma(B).H + o(H)$ the Fréchet-derivative. For $\sigma\colon \Sym^+(3) \subset \Sym(3) \to \Sym(3)$ the same applies. Similarly, for $\WW\colon\R^{3 \times 3} \to \R$ we have $\WW(X + H) = \WW(X) + \langle \DD_X \WW(X), H \rangle + o(H)$.
	\item $\tau = J \, \sigma = 2\, \DD_B \widetilde{\WW}(B)\, B $ is the symmetric Kirchhoff stress tensor.
	\item $\tau = \DD_{\log V} \widehat{\WW}(\log V)$ is the ``\emph{Richter-formula}'' \cite{richter1948isotrope, richter1949hauptaufsatze}.
	\item $\sigma_i =\dd\frac{1}{\lambda_1\lambda_2\lambda_3}\dd\lambda_i\frac{\partial g(\lambda_1,\lambda_2,\lambda_3)}{\partial \lambda_i}=\dd\frac{1}{\lambda_j\lambda_k}\dd\frac{\partial g(\lambda_1,\lambda_2,\lambda_3)}{\partial \lambda_i}, \ \ i\neq j\neq k \neq i$ are the principal Cauchy stresses (the eigenvalues of the Cauchy stress tensor $\sigma$), where $g:\mathbb{R}_+^3\to \mathbb{R}$ is the unique function  of the singular values of $U$ (the principal stretches) such that $\WW(F)=\widetilde{\WW}(U)=g(\lambda_1,\lambda_2,\lambda_3)$.
	\item $\sigma_i =\dd\frac{1}{\lambda_1\lambda_2\lambda_3}\frac{\partial \widehat{g}(\log \lambda_1,\log \lambda_2,\log \lambda_3)}{\partial \log \lambda_i}$, where $\widehat{g}:\mathbb{R}^3\to \mathbb{R}$ is the unique function such that \\ \hspace*{0.3cm} $\widehat{g}(\log \lambda_1,\log \lambda_2,\log \lambda_3)\colonequals g(\lambda_1,\lambda_2,\lambda_3)$.
	\item $\tau_i =J\, \sigma_i=\dd\lambda_i\frac{\partial g(\lambda_1,\lambda_2,\lambda_3)}{\partial \lambda_i}=\frac{\partial \widehat{g}(\log \lambda_1,\log \lambda_2,\log \lambda_3)}{\partial \log \lambda_i}$ \, . 
\end{itemize}

\vspace*{2em}
\noindent \textbf{Conventions for fourth-order symmetric operators, minor and major symmetry} \\
\\
Fourth order tensors are written as $\H$ or $\C$. For a fourth-order linear mapping $\C\colon\Sym(3) \to \Sym(3)$ we agree on the following convention. \\
\\
We say that $\C$ has \emph{minor symmetry} if
\begin{align}
	\C.S \in \Sym(3) \qquad \forall \, S \in \Sym(3).
\end{align}
This can also be written in index notation as $C_{ijkm} = C_{jikm} = C_{ijmk}$. If we consider a more general fourth-order tensor $\C\colon\R^{3 \times 3} \to \R^{3 \times 3}$ then $\C$ can be transformed having minor symmetry by considering the mapping $X \mapsto \sym(\C. \sym X)$ such that $\C\colon\R^{3 \times 3} \to \R^{3 \times 3}$ is minor symmetric, if and only if $\C.X = \sym(\C.\sym X)$. \\
\\
We say that $\C$ has \emph{major symmetry} (or is \emph{self-adjoint}, respectively) if
\begin{align}
	\langle \C. S_1, S_2 \rangle = \langle \C. S_2, S_1 \rangle \qquad \forall \, S_1, S_2 \in \Sym(3).
\end{align}
Major symmetry in index notation is understood as $C_{ijkm} = C_{kmij}$. \\
\\
The set of positive definite, major symmetric fourth-order tensors mapping $\R^{3 \times 3} \to \R^{3 \times 3}$ is denoted as $\Sym^{++}_4(9)$, in case of additional minor symmetry, i.e.~mapping $\Sym(3) \to \Sym(3)$ as $\Sym^{++}_4(6)$. By identifying $\Sym(3) \cong \R^6$, we can view $\C$ as a linear mapping in matrix form $\widetilde \C\colon\R^6 \to \R^6$. \newline If $H \in \Sym(3) \cong \R^6$ has the entries $H_{ij}$, we can write
\begin{align}
	\label{eqvec1}
	h = \textnormal{vec}(H) = (H_{11}, H_{22}, H_{33}, H_{12}, H_{23}, H_{31}) \in \R^6 \qquad \textnormal{so that} \qquad \langle \C.H, H \rangle_{\Sym(3)} = \langle \widetilde \C.h, h \rangle_{\R^6}.
\end{align}
If $\C\colon\Sym(3) \to \Sym(3)$, we can define $\bfsym \C$ by
\begin{align}
	\langle \C.H, H \rangle_{\Sym(3)} = \langle \widetilde \C.h, h \rangle_{\R^6} = \langle \sym \, \widetilde \C. h, h \rangle_{\R^6} \equalscolon \langle \bfsym \C.H, H \rangle_{\Sym(3)}, \qquad \forall \, H \in \Sym(3).
\end{align}
Major symmetry in these terms can be expressed as $\widetilde \C \in \Sym(6)$. \emph{In this text, however, we omit the tilde-operation and ${\bf sym}$ and write in short $\sym\,\C\in {\rm Sym}_4(6)$ if no confusion can arise.} In the same manner we speak about $\det \C$ meaning $\det \widetilde \C$. \\
\\
A linear mapping $\C\colon\R^{3 \times 3} \to \R^{3 \times 3}$ is positive definite if and only if
\begin{align}
	\label{eqposdef1}
	\langle \C.H, H \rangle > 0 \qquad \forall \, H \in \R^{3 \times 3} \qquad \iff \qquad \C \in \Sym^{++}_4(9)
\end{align}
and analogously it is positive semi-definite if and only if
\begin{align}
	\label{eqpossemidef1}
	\langle \C.H, H \rangle \ge 0 \qquad \forall \, H \in \R^{3 \times 3} \qquad \iff \qquad \C \in \Sym^+_4(9).
\end{align}
For $\C\colon\Sym(3) \to \Sym(3)$, after identifying $\Sym(3) \cong \R^6$, we can reformulate \eqref{eqposdef1} as $\widetilde \C \in \Sym^{++}(6)$ and \eqref{eqpossemidef1} as $\widetilde \C \in \Sym^+(6)$.\\
\subsection{Some objective derivatives}
\label{sec:Some_objective_derivatives}
Typical representatives for objective derivatives $\frac{\DD^{\sharp}}{\DD t}$ are
\begin{alignat}{2}
	\frac{\DD^{\CR}}{\DD t}[\sigma] &:= &&\; \frac{\DD}{\DD t}[\sigma] + L^T \, \sigma + \sigma \, L \quad \text{(non-corotational Cotter-Rivlin derivative (cf.~\cite{Cotter1955TENSORSAW})).} \notag \\
	\frac{\DD^{\Old}}{\DD t}[\sigma] &:= &&\; \frac{\DD}{\DD t}[\sigma] - (L \, \sigma + \sigma \, L^T) \quad \text{(non-corotational convective contravariant Oldroyd derivative (cf.~\cite{oldroyd1950}))}, \notag \\
	\frac{\DD^{\text{Hencky}}}{\DD t}[\sigma] &:= &&\; \frac{\DD}{\DD t}[\sigma] + \sigma \, W - W \, \sigma + \sigma \, \tr(D) \quad \text{(non-corotational Biezeno-Hencky derivative (cf.~\cite{biezeno1928})}, \notag \\
	\label{eqobjrates}
	& &&\hspace{5.1cm} \text{sometimes also called Hill-rate (cf.~\cite{korobeynikov2023})),} \\
	\frac{\DD^{\TR}}{\DD t}[\sigma] &:= &&\; \frac{\DD}{\DD t}[\sigma] - (L \, \sigma + \sigma \, L^T) + \sigma \, \tr(D) \quad \text{(non-corotational Truesdell derivative (cf.~\cite[eq.~3]{truesdellremarks})).} \notag
\end{alignat}
Even though it will not play a role in our development, we would like to point out that there is an intimate relation of these objective rates to Lie derivatives and covariant derivatives (cf.~\cite{kolev2024objective, Marsden1983a}).

The most well-known members of the subfamily of corotational derivatives (cf. \cite{pos_cor}) are
	\begin{alignat}{2}
		\frac{\DD^{\ZJ}}{\DD t}[\sigma] &:= && \; \frac{\DD}{\DD t}[\sigma] + \sigma \, W - W \, \sigma = Q^W \, \frac{\DD}{\DD t}[(Q^W)^T \, \sigma \, Q^W] \, (Q^W)^T, \quad  \text{for $Q^W(t) \in \OO(3)$ with} \; W = \dot{Q}^W \, (Q^W)^T, \notag \\
		& &&\; \text{where} \; W = \sk L \;\; \text{is the vorticity} \quad (\text{corotational \textbf{Zaremba-Jaumann derivative} (cf.~\cite{jaumann1905, jaumann1911geschlossenes, zaremba1903forme}}), \notag \\
		\frac{\DD^{\GN}}{\DD t}[\sigma] &:= &&\; \frac{\DD}{\DD t}[\sigma] + \sigma \, \Omega^R - \Omega^R \, \sigma = R \, \frac{\DD}{\DD t}[R^T \, \sigma \, R] \, R^T, \quad \text{for $R(t) \in \OO(3)$ with the ``polar spin''} \, \Omega^R := \dot{R} \, R^T, \notag \\
		& &&\; \text{with the polar decomposition} \; F = R \, U \quad \text{(corotational \textbf{Green-Naghdi derivative} (cf.~\cite{bellini2015, Green_McInnis_1967,  Green1965, naghdi1961}))}, \notag \\
		\frac{\DD^{\log}}{\DD t}[\sigma] &:= &&\; \frac{\DD}{\DD t}[\sigma] + \sigma \, \Omega^{\log} - \Omega^{\log} \, \sigma, \quad \text{for $Q^{\log}(t) \in \OO(3)$ with the ``logarithmic spin''} \; \Omega^{\log} = \dot{Q}^{\log} \, (Q^{\log})^T \notag \\
		& &&\; \text{(corotational \textbf{logarithmic derivative} (cf.~\cite{xiao98_1}))}.
	\end{alignat}
\subsection{Uniaxial stretch - one dimensional insights}
We find it illuminating to regard the rate-formulation in a uniaxial situation, in which the Zaremba-Jaumann derivative $\frac{\DD^{\ZJ}}{\DD t}$ (and indeed any corotational derivative $\frac{\DD^{\circ}}{\DD t}$ (cf.\,\cite{pos_cor})) reduces to the material derivative $\frac{\DD}{\DD t}$ since rotation effects are excluded. More precisely, we consider the deformation
\begin{equation}
	\phi (x_1,x_2,x_3) = (\lambda \, x_1, x_2, x_3)
\end{equation}
where $\lambda > 0$ is the uniaxial stretch. Then we have
\begin{equation}
	\begin{alignedat}{2}
		F&= \diag (\lambda, 1, 1) ,\quad F^T = \diag (\lambda, 1 , 1) , \quad J = \det F = \lambda, \\
		\Cof F &= \det F \, F^{-T} = \lambda \, \diag (\frac{1}{\lambda}, 1, 1)
		= \diag (1, \lambda, \lambda)
		\, ,\\
		\WW(\lambda):&= \WW \left( \diag (\lambda, 1, 1) \right) = \WW (F) \, ,\\
		W &= \skw L = \skw (\dot{F} F^{-1}) = \skw \big( \diag (\dot{\lambda}, 1, 1) \cdot \diag (\lambda, 1, 1)^{-1}\big) = \skw \diag \big( \frac{\dot{\lambda}}{\lambda}, 1, 1\big) = 0\, ,\\
		S_1 (F) &= \DD_F \WW (F) \big\vert_{F = \text{diag}(\lambda, 1, 1)} = \diag (\DD_{\lambda} \WW (\lambda), 0, 0) \, .
	\end{alignedat}
\end{equation}
%

\begingroup
\renewcommand\arraystretch{2}
\begin{longtable}[h!]{| c | c |}
	\hline
	\multicolumn{2}{| c |}{stress-tensors}\\
	\hline
	\textbf{1D-simplification} & \textbf{3D ideal isotropic nonlinear elasticity} \\
	\hline
	\endfirsthead
	\hline
	\multicolumn{2}{|c|}{stress-tensors}\\
	\hline
	\textbf{1D} & \textbf{3D ideal isotropic nonlinear elasticity} \\
	\hline
	\endhead
	\hline
	\endfoot
	\hline
	\endlastfoot
	\makecell{energy $\lambda \mapsto \WW(\lambda) = \widehat{\WW}(\log \lambda) = \widetilde{W}(\lambda^2)$} & \makecell{$F \mapsto \WW(F) = \WW(V) = \widehat{\WW}(\log V) = \widetilde{W} (C)$} \\
	\hline
	\makecell{Cauchy stress \\ $\sigma(\lambda) = \frac{1}{\lambda} \, \DD_{\lambda} \, \WW (\lambda) \cdot \lambda$ , \\  $\DD_{\lambda}\WW(\lambda) = \widehat{\sigma}(\log \lambda)$} & \makecell{Cauchy stress \\ $\sigma = \frac{1}{J} \, S_1 \, F^T, \quad J = \det F$} \\
	\hline
	\makecell{Cauchy stress \\ $\sigma (\lambda) = \frac{1}{\lambda} \, \left(\lambda \, 2 \, \DD_{\lambda^2} \widetilde{\WW}(\lambda^2) \, \lambda \right)$} & \makecell{Cauchy stress \\ $\sigma = \frac{1}{J} \, F \, S_2 \, F^T$ \quad ``Doyle-Ericksen formula''} \\
	\hline
	\makecell{1. Piola-Kirchhoff stress \\ $\sigma(\lambda) = \DD_{\lambda}\WW(\lambda)$ \\ $\sigma (\lambda) = \lambda \cdot 2 \, \DD_{\lambda^2} \widetilde{W} (\lambda^2) = \DD_{\lambda} \WW (\lambda) $} & \makecell{1. Piola-Kirchhoff stress \\ $S_1(F) = \DD_F\WW(F) \ = F \cdot S_2$ } \\
	\hline
	\makecell{Biot-stress \\ $\sigma(\lambda) = \DD_{\lambda}\WW(\lambda)$} & \makecell{Biot-stress \\ $T_{\text{Biot}} = \DD_U\WW(U)$} \\
	\hline
	\makecell{2. Piola-Kirchhoff stress \\ $S_2(\lambda) = 2 \, \DD_{\lambda^2} \widetilde{\WW}(\lambda^2)$} & \makecell{2. Piola-Kirchhoff stress \\ $S_2(C) = 2 \, \DD_C \widetilde{\WW}(C)$} \\
	\hline
	\makecell{Kirchhoff stress \\ $\tau (\lambda) = \lambda \cdot \sigma(\lambda) = \DD_{\log \lambda} \widehat{\WW}(\log \lambda)$} & \makecell{Kirchhoff stress (weighted Cauchy stress) \\ $\tau = J \, \sigma = \DD_{\log V} \widehat{\WW}(\log V) = 2 \, \DD_{\log B} \doublehat{\WW}(\log B)$} \\
	\hline
	\makecell{Kirchhoff stress \\ $\tau = \DD_{\log \lambda} \widehat{\WW}(\log \lambda)$} & \makecell{``Richter's formula'' \\ $\tau = \DD_{\log V} \widehat{\WW}(\log V)$} \\
	\hline
	\rule{0.1\textwidth}{1pt}& \makecell{isotropic linear elasticity \\ $\sigma = \C^{\iso}. \, \varepsilon = 2\mu  \, \dev \varepsilon + \kappa \, \tr(\varepsilon) \id$ \\ $\C^{\iso}$ invertible \quad $\iff \quad \mu \neq 0, \; \kappa \neq 0$ \\ $\C^{\iso} \in \Sym^{++}_4(6) \quad \iff \quad \mu, \kappa>0$}\\
	\hline
	\makecell{$\frac{\DD^{\ZJ}}{\DD t} = \frac{\DD}{\DD t}$} & \makecell{$\frac{\DD^{\ZJ}}{\DD t} [\sigma] = \frac{\DD}{\DD t} [\sigma] + W\sigma - \sigma W$}
\end{longtable}
\endgroup


We observe that in the uniaxial situation we cannot distinguish between the Biot-stress $T_{\Biot}$ (the engineering stress), the first Piola -Kirchhoff stress $S_1$ and the Cauchy stress $\sigma$. However, it is possible to see the essential differences appearing for the Kirchhoff stress $\tau$, the Cauchy stress $\sigma$ and the second Piola-Kirchhoff stress $S_2$.
\begingroup
\renewcommand\arraystretch{2}
\begin{longtable}[h!]{| c | c |}
	\hline
	\textbf{1D-simplification} & \textbf{3D ideal isotropic nonlinear elasticity} \\
	\hline
	\endfirsthead
	\hline
	\textbf{1D-simplification} & \textbf{3D ideal isotropic nonlinear elasticity} \\
	\hline
	\endhead
	\hline
	\endfoot
	%
	\endlastfoot
	\makecell{Hill's inequality \\ $\WW(\lambda) = \widehat{\WW}(\log \lambda), \quad \log \lambda \mapsto \widehat{\WW}(\log \lambda) \quad$ is convex \\ $\notiff \lambda \mapsto \WW(\lambda)$ is convex: \quad $[\WW(\lambda) = \frac12 (\log \lambda)^2]$} & \makecell{Hill's inequality, hyperelasticity (Sidoroff \cite{sidoroff1974restrictions}) \medskip \\ $\WW(F) = \widehat{\WW}(\log V), \quad \log V \mapsto \widehat{\WW}(\log V)$ \quad is convex} \\
	\hline
	\makecell{Hill's inequality \\ $\textbf{(}\widehat{\tau}(\log \lambda_1) - \widehat{\tau}(\log \lambda_2)\textbf{)}\textbf{(}\log \lambda_1 - \log \lambda_2\textbf{)} > 0$ \\ $\notiff \textbf{\{}\DD_{\lambda} \sigma(\lambda) > 0 \Leftrightarrow \DD_{\log \lambda} \widehat{\sigma} (\log \lambda) > 0\textbf{\}}$} & \makecell{Hill's inequality, Cauchy elastic \\ $\pmb{\langle} \widehat{\tau}(\log V_1) - \widehat{\tau}(\log V_2), \log V_1 - \log V_2 \pmb{\rangle} > 0$ \\ $\notiff \det \DD \sigma(V) > 0$} \\
	\hline
	\makecell{Hill's inequality \\ $\H_{\tau} (\tau) > 0 \quad \iff \quad \DD_{\log \lambda}^2 \, \widehat{\WW} (\log \lambda) > 0$} & \makecell{Hill's inequality \\ $\H_{\tau}(\tau) \in \Sym_4^{++}(6) \quad \iff \quad \DD_{\log V} \, \widehat{\tau} (\log V) \in \Sym_4
		^{++}(6)$}\\
	\hline
	\makecell{TSTS-M$^+$ \\ $\textbf{(}\widehat{\sigma} (\log \lambda_1) - \widehat{\sigma} (\log \lambda_2)\textbf{)} \textbf{(} \log \lambda_1 - \log \lambda_2 \textbf{)} > 0$} & \makecell{TSTS-M$^+$ \\  $\pmb{\langle} \widehat{\sigma} (\log V_1) - \widehat{\sigma} (\log V_2) \, , \, \log V_1 - \log V_2 \pmb{\rangle} > 0$}\\
	\hline
	\makecell{TSTS-M$^{++}$ \\ $\DD_{\log \lambda} \widehat{\sigma} (\log \lambda) > 0$} & \makecell{TSTS-M$^{++}$\\ $\sym \, \DD_{\log V} \widehat{\sigma} (\log V) \in \Sym_4^{++}(6)$}\\
	\hline
	\caption{Here $\H_{\tau} (\tau)$ denotes the induced stiffness tensor in the rate formulation $\frac{\DD^{\ZJ}}{\DD t} [\tau] = \H_{\tau}(\tau) \ldot D$.}
\end{longtable}
\endgroup
\vspace{-2em}
\begingroup
\renewcommand\arraystretch{2}
\begin{longtable}[h!]{| c | c |}
	\hline
	\textbf{1D-simplification} & \textbf{3D ideal isotropic nonlinear elasticity} \\
	\hline
	\endfirsthead
	\hline
	\textbf{1D-simplification} & \textbf{3D ideal isotropic nonlinear elasticity} \\
	\hline
	\endhead
	\hline
	\endfoot
	\hline
	\endlastfoot
	\makecell{$\lambda^2 \mapsto \widetilde{\WW}(\lambda^2), \quad \WW(\lambda) = \widetilde{\WW}(\lambda^2)$ \quad is convex \\ $\notiff \lambda \mapsto \WW(\lambda)$ is convex \quad $[W_{\text{SVK}}(\lambda) = \frac18(\lambda^2-1)^2]$} & \makecell{\rule{0pt}{13pt}$\WW(F) = \widetilde{\WW}(C), \quad C \mapsto \widetilde{\WW}(C)$ \quad is convex, \\ can be consistent with polyconvexity \\ and $\WW(F) \to + \infty$ for $\det F \to 0$, \\ but does not exclude otherwise problematic response} \\
	\hline
	\makecell{\rule{0pt}{21pt}
	{\begin{math}
			\begin{aligned}
				\tau &= \lambda \, \sigma(\lambda) = 2 \, \DD_{\lambda^2}\widetilde{\WW}(\lambda^2) \, \lambda^2
				=\DD_{\log \lambda}\widehat{\WW}(\log \lambda) \\ &= \DD_{\lambda}[\widehat{\WW}(\log \lambda)] \lambda =\lambda \, \DD_{\lambda}\WW(\lambda)
		\end{aligned}
	\end{math}}
	} & \makecell{
	\begin{math}
		\begin{aligned}
			\tau = \det F \cdot \sigma(V) &= \DD_{\log V}\widehat{\WW}(\log V) = 2 \, \DD_B \widetilde{\WW}(B) B \\
			&= \DD_V \widehat{\WW}(\log V) \cdot V
		\end{aligned}
	\end{math}
	} \\
	\hline
	$S_2(\lambda^2) = 2 \, \DD_{\lambda^2}\widetilde{\WW}(\lambda^2)$ & \makecell{$S_2(C) = 2 \, \DD_C \widetilde{\WW}(C) = 2 \, \DD_B \widetilde{\WW}(B)$ \\ $\C:= 4 \, \DD^2_C \widetilde{\WW}(C) = 2 \, \DD_C S_2(C),$ \quad second elasticity tensor} \\
	\hline
	\makecell{$\lambda^2 \mapsto \widetilde{\WW} (\lambda^2)$ convex} & \makecell{$\mathbb{C} \in \Sym_4^{++}(6) \quad \iff \quad C \mapsto \widetilde{\WW} (C)$ convex}\\
	\hline
	\makecell{$\DD_{\lambda}^2 \, \WW (\lambda) > 0 \quad \iff \quad \DD_{\lambda} \sigma (\lambda) > 0$ \\ $ \quad \iff \quad \DD_{\log \lambda} \, \widehat{\sigma} (\log \lambda) > 0$} & \makecell{$\mathbb{A} := \DD_F^2 \, \WW (F) = \DD_F \, S_1 (F),$ \quad first elasticity tensor}\\
	\hline
	\makecell{$\DD_{\lambda}^2 \, \WW (\lambda) > 0 \quad \iff \quad \lambda \mapsto \WW (\lambda)$ convex} & \makecell{$\mathbb{A} \in \Sym_4^{++}(9) \quad \iff \quad F \mapsto \WW (F)$ convex}\\
\end{longtable}
\endgroup
\begingroup
\begin{longtable}[h!]{| c | c |}
	\hline
	\multicolumn{2}{| c |}{rate-type conditions in hypoelasticity}\\
	\hline
	\textbf{1D-simplification} & \textbf{3D ideal isotropic nonlinear elasticity} \\
	\hline
	\endfirsthead
	\hline
	\multicolumn{2}{|c|}{rate type conditions}\\
	\hline
	\textbf{1D-simplification} & \textbf{3D ideal isotropic nonlinear elasticity} \\
	\hline
	\endhead
	\hline
	\endfoot
	\endlastfoot
	$\frac{\dif}{\dif t}[\sigma] = 2 \, \mu \cdot D, \quad D=\frac{\dot{\lambda}}{\lambda} \quad \implies \quad \widehat{\sigma}(\log \lambda) = 2 \, \mu \, \log \lambda$ & \makecell{\vspace*{0.0005em}\\ $\frac{\DD^{\log}}{\DD t}[\sigma] = \C^{\iso} \, . \, D, \quad D= \sym(\dot{F} \, F^{-1})$ \\ $\implies \widehat{\sigma}(\log V) = \C^{\iso} \, . \, \log V$, but not hyperelastic (Bruhns)} \\
	\hline
	\vspace*{0.005em}\\ $\frac{\dif}{\dif t}[\sigma] = 2 \, \mu \cdot D, \quad D=\frac{\dot{\lambda}}{\lambda} \quad \implies \quad \widehat{\sigma}(\log \lambda) = 2 \, \mu \, \log \lambda$ & \makecell{$\frac{\DD^{\ZJ}}{\DD t}[\sigma] = \DD_B \, \sigma(B) . [B \, D + D \, B], \quad D= \sym(\dot{F} \, F^{-1})$ \\
	\vspace*{0.005em}\\} \\
	\hline
	\makecell{\vspace*{0.005em}\\ $\frac{\dif}{\dif t}[\sigma] = \H(\sigma) \, D \, , \quad D = \frac{\dot{\lambda}}{\lambda} \quad \text{with} \quad \H(\sigma) > 0$ \\ $\implies \left\{ \begin{array}{rll} \log \lambda &\mapsto \widehat{\sigma}(\log \lambda) &\quad \text{is monotone} \\ \lambda &\mapsto \sigma(\lambda) &\quad \text{is monotone} \end{array} \right.$} & \makecell{$\frac{\DD^{\ZJ}}{\DD t}[\sigma] = \H^{\ZJ}(\sigma) \, . \, D, \quad \sym \, \H^{\ZJ}(\sigma) \in \Sym^{++}_4(6)$ \\ $\implies \log V \mapsto \widehat{\sigma}(\log V)$ is monotone \\ \textbf{but} $V \mapsto \sigma (V)$ may \textbf{not} be monotone} \\
	\hline
	\rule{0.1\textwidth}{1pt} & \makecell{$\frac{\DD^{\log}}{\DD t}[\sigma] = \H^{\log}(\sigma) \, . \, D, \quad \sym \, \H^{\log}(\sigma) \in \Sym^{++}_4(6)$ \\ $\implies\log V \mapsto \widehat{\sigma}(\log V)$ is monotone} \\
	\hline
	\makecell{\vspace{0.0005em} \\ $\frac{\dif}{\dif t}[\sigma] = \H(\sigma) \, D, \quad D = \frac{\dot{\lambda}}{\lambda},
	\quad \H(\sigma) > 0$ \\ $\log \lambda \mapsto \widehat{\sigma}(\log \lambda)$ is invertible} & \makecell{$\frac{\DD^{\ZJ}}{\DD t}[\sigma] = \H^{\ZJ}(\sigma) \, . \, D, \quad \det \, \H^{\ZJ}(\sigma) > 0$ \\ $\iff V \mapsto \sigma(V)$ bijective} \\
	\hline
	\makecell{\vspace{0.0005em} \\ $\frac{\dif}{\dif t}[\sigma] = \H(\sigma) \, D, \quad D=\frac{\dot{\lambda}}{\lambda}, \quad \quad \H(\sigma) > 0$ \\ $\implies \log \lambda \mapsto \widehat{\sigma}(\log \lambda)$ is monotone} & \makecell{$\frac{\DD^{\log}}{\DD t}[\sigma] = \H^{\log}(\sigma) \, . \, D, \quad \det \, \H^{\log}(\sigma) > 0$ \\ $\iff V \mapsto \sigma(V)$ bijective} \\
	\hline
	\makecell{hyperelasticity: \medskip \\ $\WW(\lambda)$ consistent with $\frac{\dif}{\dif t}[\sigma] = \H(\sigma) \, D$ (present paper)} & \makecell{hyperelasticity: \\ $\WW(F)$ consistent with $\frac{\DD^{\ZJ}}{\DD t}[\sigma] = \H^{\ZJ}(\sigma) \, . \, D$ \\ if $V \mapsto \sigma(V)$ is invertible (Truesdell 1963)} \\
	\hline
	\makecell{\vspace*{0.005em}\\ $\frac{\dif}{\dif t}[\sigma] = \H(\sigma) \, \frac{\dot{\lambda}}{\lambda}$ \\ $\iff \quad \H(\sigma) = \DD_{\log \lambda} \, \widehat{\sigma}(\log \lambda) = \DD_{\lambda} \, \sigma(\lambda) \cdot \lambda$} & \makecell{\vspace*{0.005em}\\{\begin{math}
				\begin{aligned}
					\frac{\DD^{\ZJ}}{\DD t}[\sigma] &= \H^{\ZJ}(\sigma) \, . \, D, \\
					\H^{\ZJ}(\sigma)&= \DD_B \, \sigma(B) \, . \, [B \, D + D \, B] \\
					&= \DD_{\log B} \, \widehat{\sigma} (\log B) \, . \, \DD_B \, \log B \, . \, [B \, D + D \, B]
				\end{aligned}
		\end{math}}
		} \\
	\hline
	\makecell{\vspace*{0.005em} \\ $\frac{\dif}{\dif t}[\sigma] = \H(\sigma) \, \frac{\dot{\lambda}}{\lambda}$ \\ $\iff \quad \H(\sigma) = \DD_{\log \lambda} \, \widehat{\sigma}(\log \lambda) = \DD_{\lambda} \, \sigma(\lambda) \cdot \lambda$} & \makecell{$\frac{\DD^{\log}}{\DD t}[\sigma] = \H^{\log}(\sigma) \, . \, D, \quad \H^{\log}(\sigma)= \DD_{\log V} \, \widehat{\sigma} (\log V)$} \\
	\hline
	\caption{Results on the right hand side are drawn from \cite{CSP2024}.}
\end{longtable}
\endgroup

\subsection{One-dimensional hypoelasticity in terms of the Kirchhoff stress tensor $\tau$}
\label{sec:special_case_1D_hypo_Kirchhoff_stress}
Taking into account the kinematics of the uniaxial case,
we can express the rate-formulation of the constitutive law also in terms of $\tau(\lambda) = \lambda \, \sigma(\lambda) = \DD_{\log \lambda}\widehat{\WW}(\log \lambda)$. This reads
\begin{align}
	\label{eqkirchrate}
	\frac{\DD}{\DD t}[\tau(\log \lambda)] &= \DD_{\log \lambda} \tau(\log \lambda) \, \frac{\dot{\lambda}}{\lambda} =: \H_{\tau}(\tau) \, D = \DD^2_{\log \lambda} \widehat{\WW}(\log \lambda) \, D \, , \quad \quad D = \frac{\dot{\lambda}}{\lambda} \, .
\end{align}
In this case, we observe therefore the purely Hessian-character of the corresponding induced tangent-stiffness $\H_{\tau}(\tau)$ in the sense that
\begin{align}
	\H_{\tau}(\tau) = \DD^2_{\log \lambda} \widehat{\WW}(\log \lambda) = \DD_{\log \lambda} \tau(\log \lambda) \quad \quad \text{(cf. \eqref{eq:derivative_tau})} \, .
\end{align}
Note the following relation between the Kirchhoff stress tensor $\tau(\lambda) = \widehat{\tau}(\log \lambda)$ and the Cauchy stress tensor $\sigma$ as well as the stiffness tensor $\H_{\tau}(\tau)$ and the \emph{logarithmic stiffness} $\H(\sigma)$
\begin{equation}
	\label{eq2.113001}
	\begin{alignedat}{2}
		\widehat{\tau}(\log \lambda) &= \mathrm{e}^{\log \lambda} \, \widehat{\sigma}(\log \lambda) = \lambda \, \sigma(\lambda) \\
		\implies \quad \H_{\tau}(\tau) &= \DD_{\log \lambda} \tau(\log \lambda) = \mathrm{e}^{\log \lambda} \, \widehat{\sigma}(\log \lambda) + \mathrm{e}^{\log \lambda} \, \DD_{\log \lambda} \widehat{\sigma}(\log \lambda) \\
		&= \lambda \, [\DD_{\log \lambda} \widehat{\sigma}(\log \lambda) + \widehat{\sigma}(\log \lambda)] = \lambda \, [\H(\widehat{\sigma}) + \widehat{\sigma}] = \lambda \, \left[\H\left(\frac{1}{\lambda} \, \tau \right) + \frac{1}{\lambda} \, \tau \right].
	\end{alignedat}
\end{equation}
This representation coincides with the three-dimensional rate formulation for the Kirchhoff stress, see \eqref{eq0.4}. \\
Assuming that $\H_{\tau}(\tau) \equiv 1$ (zero grade hypoelasticity), equation \eqref{eqkirchrate} integrates directly to the quadratic Hencky type energy, i.e.
\begin{align}
	1 \equiv \H_{\tau}(\tau) = \DD^2_{\log \lambda} \widehat{\WW}(\log \lambda) \quad \iff \quad \widehat{\WW}(\log \lambda) = \frac12 \log^2(\lambda).
\end{align}
Thus, $\H_{\tau} \equiv 1$ delivers $\widehat{\tau}(\widehat{\xi}) = \widehat{\xi} = \log \lambda$, so that the Kirchhoff stress $\tau$ is monotone in $\log \lambda$ (Hill's inequality), while the Cauchy stress $\sigma (\lambda)$ is now non-monotone. The Abaqus\textsuperscript{\texttrademark} and Ansys\textsuperscript{\texttrademark} "stability check" considers only the monotonicity of $\widehat{\tau}$ as function of $\log \lambda$. Thus, the Hencky energy is "stable" as seen by Abaqus\textsuperscript{\texttrademark} and Ansys\textsuperscript{\texttrademark}.
\begin{figure}[h!]
	\begin{center}
		\begin{minipage}[h!]{0.4\linewidth}
			\centering
			\includegraphics[scale=0.25]{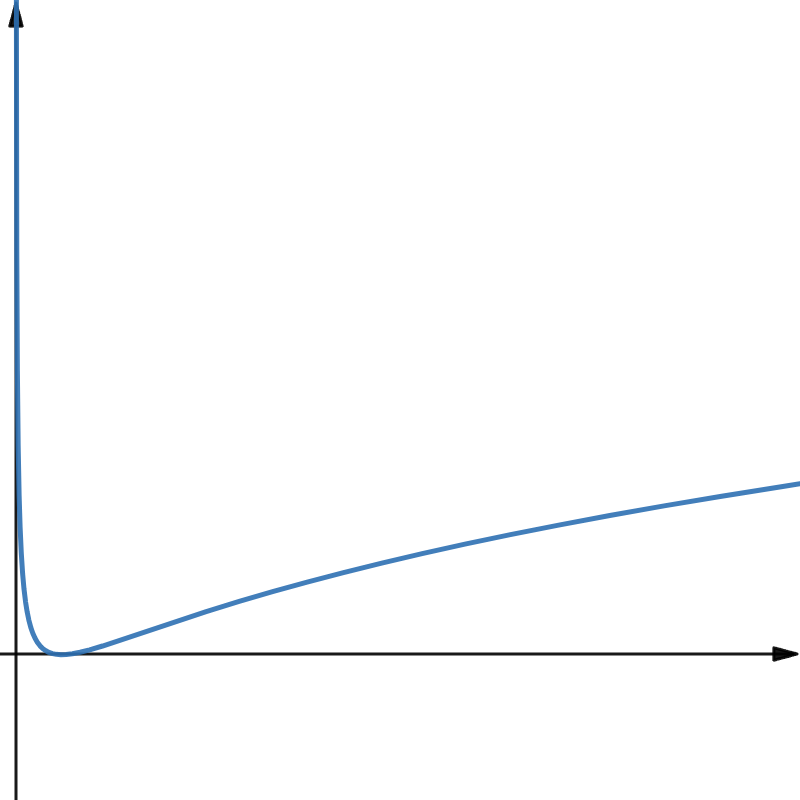}
			\put(-188,25){\footnotesize{$1$}}
			\put(-10,25){\footnotesize{$\lambda$}}
			\put(-225,190){\footnotesize{$\WW(\lambda)$}}
			\caption{Picture of the non-convex 1D-quadratic Hencky strain energy \break $W_{\text{Hencky}}(\lambda) = \frac{1}{2}\log^2(\lambda)$}
		\end{minipage}
		\qquad
		\begin{minipage}[h!]{0.4\linewidth}
			\includegraphics[scale=0.35]{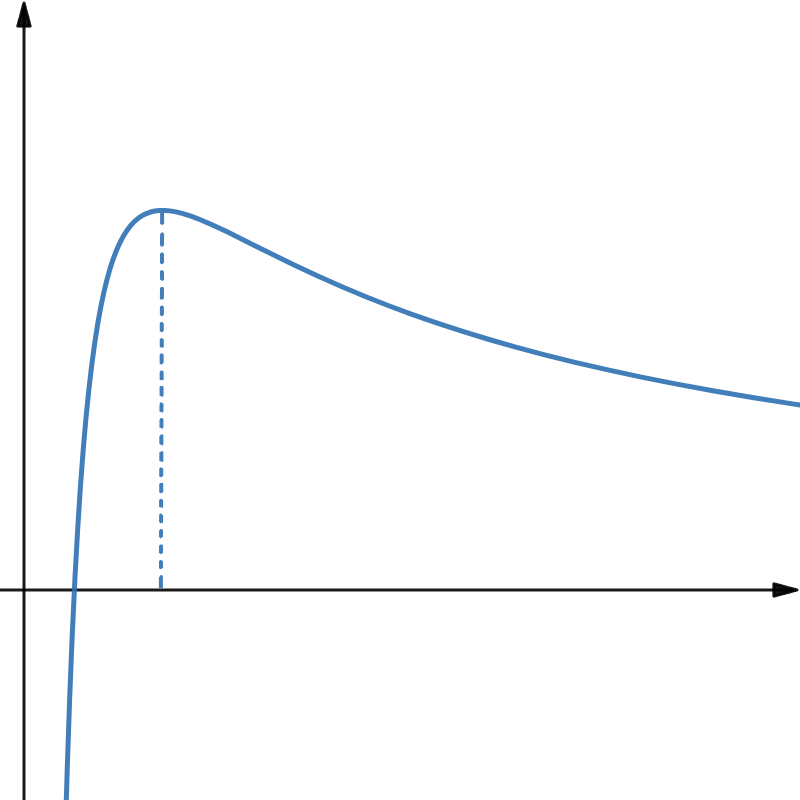}
			\put(-190,45){\footnotesize{$1$}}
			\put(-170,45){\small{$\mathrm{e}$}}
			\put(-5,40){\footnotesize{$\lambda$}}
			\put(-200,200){\footnotesize{$\sigma(\lambda)$}}
			\caption{Cauchy stress $\sigma(\lambda) = \frac{\log \lambda}{\lambda}$, non-monotone response for the Hencky energy $\WW(\lambda) = \frac12 \, \log^2(\lambda)$.}
		\end{minipage}
	\end{center}
\end{figure}
	\begin{figure}[h!]
	\begin{center}
		\begin{minipage}[h!]{0.4\linewidth}
			\includegraphics[scale=0.25]{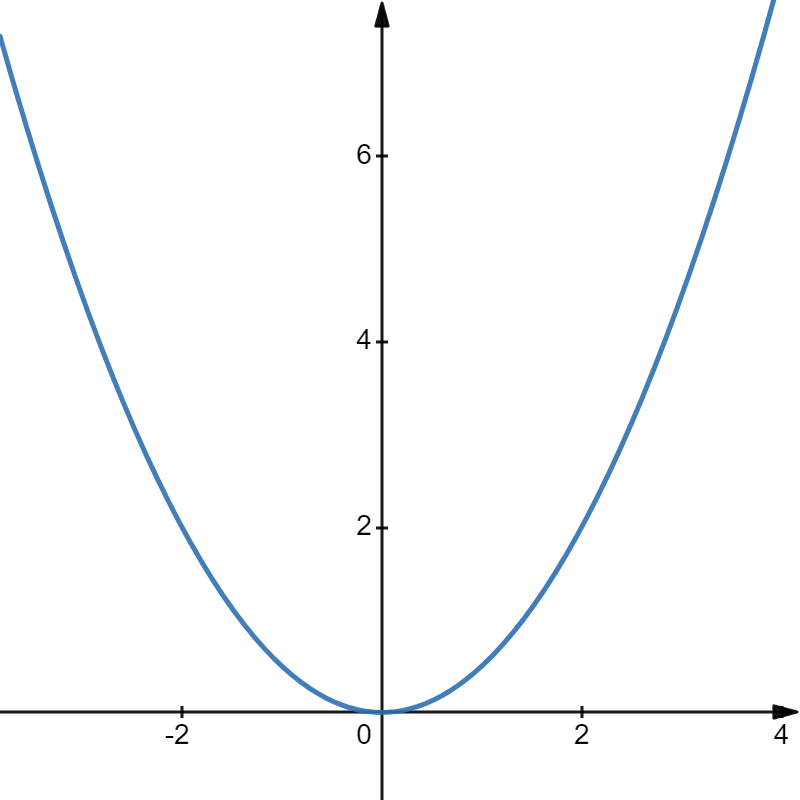}
			\put(-10,5){\footnotesize{$\log \lambda$}}
			\put(-145,190){\footnotesize{$\widehat{\WW}(\log \lambda)$}}
			\caption{Picture of the strain energy \break $\widehat{\WW}(\log \lambda) = \frac{1}{2}\log^2(\lambda)$, convex in $\log \lambda$.}
		\end{minipage}
		\qquad
		\begin{minipage}[h!]{0.4\linewidth}
			\includegraphics[scale=0.25]{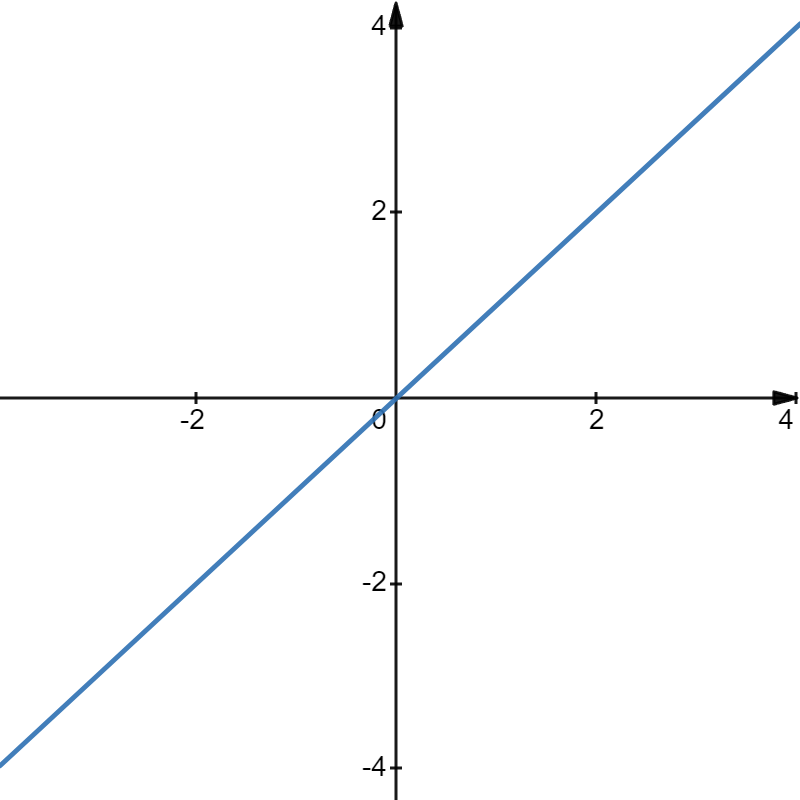}
			\put(-10,85){\footnotesize{$\log \lambda$}}
			\put(-140,190){\footnotesize{$\widehat{\tau}(\log \lambda)$}}
			\caption{Picture of the monotone Kirchhoff stress $\widehat{\tau}(\log \lambda) = \DD_{\log \lambda}\widehat{\WW}(\log \lambda)$. The monotonicity of $\widehat{\tau}$ as a function of $\log \lambda$ corresponds to the satisfaction of \emph{Hill's inequality} \eqref{eq:Hills_inequality} for the Hencky model.}
		\end{minipage}
	\end{center}
\end{figure}
\begin{figure}[h!]
	\begin{center}
		\begin{minipage}[h!]{0.4\linewidth}
			\includegraphics[scale=0.25]{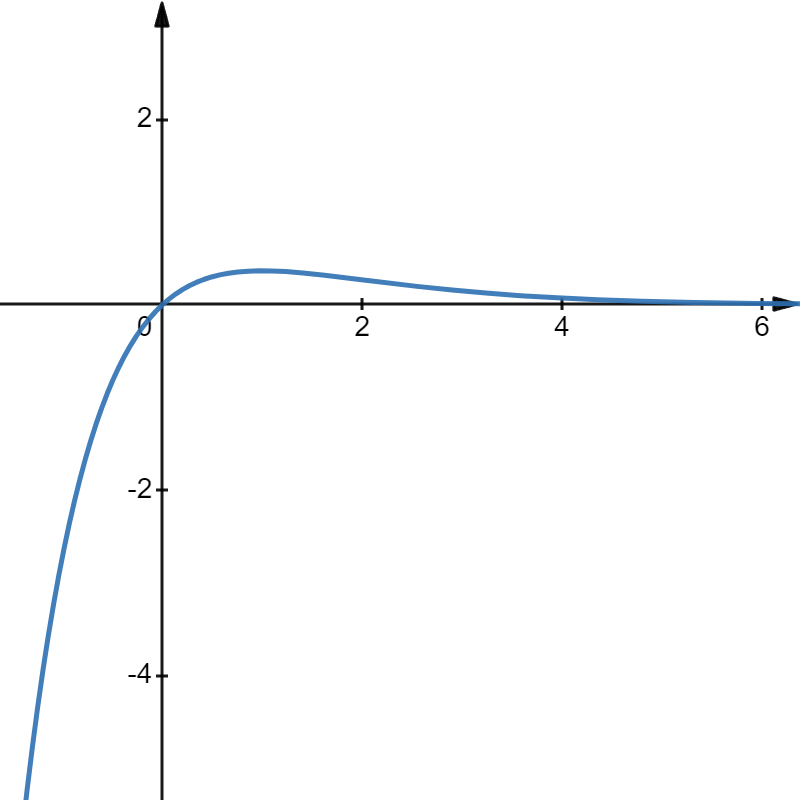}
			\put(-15,108){\footnotesize{$\log \lambda$}}
			\put(-150,190){\footnotesize{$\widehat{\sigma}(\log \lambda)$}}
			\caption{Non-monotone Cauchy stress \break $\widehat{\sigma}(\log \lambda) = \log(\lambda) \, \mathrm{e}^{-\log (\lambda)}$ in terms of the logarithmic strain $\log \lambda$.}
		\end{minipage}
		\qquad
		\begin{minipage}[h!]{0.4\linewidth}
			\centering
			\includegraphics[scale=0.25]{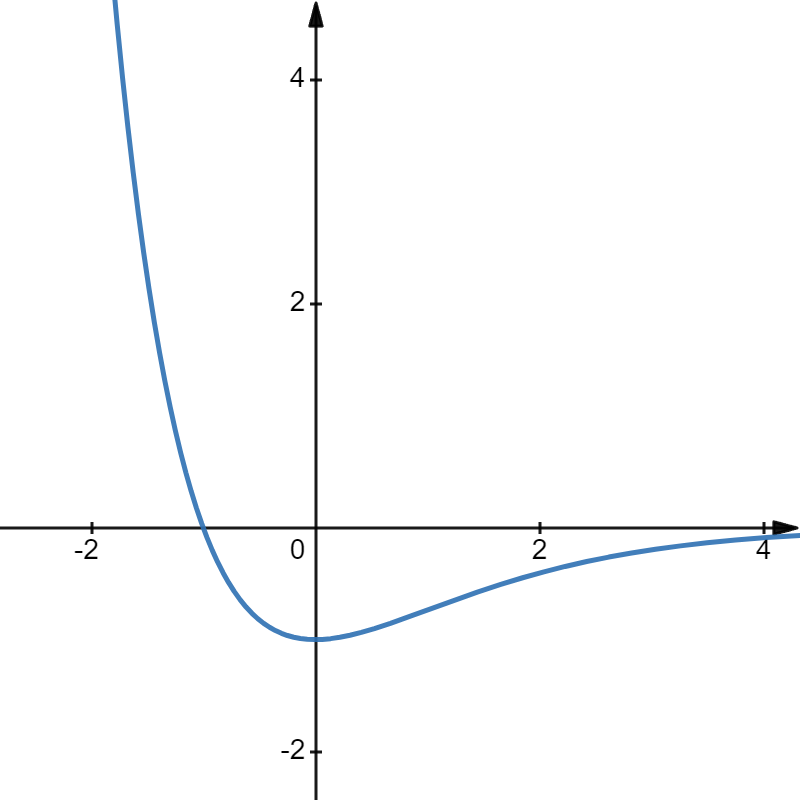}
			\put(-15,75){\footnotesize{$\log \lambda$}}
			\put(-110,190){\footnotesize{$\H(\log \lambda)$}}
			\caption{Picture of the non-positive logarithmic stiffness $\H(\log \lambda) \coloneqq \DD_{\log \lambda} \, \widehat{\sigma} (\log \lambda)$ given by $\H(\log \lambda) = -(\log \lambda + 1) \, \mathrm{e}^{-\log \lambda}$.}
		\end{minipage}
	\end{center}
\end{figure}

\subsection{The Zaremba-Jaumann rate seen as a Lie derivative}
\label{appendix:Jaumann_and_Lie}
The Lie derivative of a tensor is a derivation performed by means of pull-back and push-forward operations with respect to the \emph{flow operator} of a vector field \cite{Hughes1977a,Marsden1983a,Federico2022a}. In continuum mechanics, this vector field is the (Eulerian) velocity $v$ of the motion $\varphi$, and the Lie derivative of a tensor can be written in terms of regular push-forward and pull-back operations, as \emph{the push-forward of the time derivative of the pull-back} of the tensor \cite{Bonet2008a,Federico2022a}.

For the case of the Kirchhoff stress (which, strictly speaking, is a tensor with \emph{contravariant} components $\tau^{ij}$), the Lie derivative reads
\begin{align}
	\label{eq:Lie_derivative_tau}
	\mathcal{L}_v \tau =
	F \left[ \frac{\DD}{\DD t} [F^{-1} \tau \, F^{-T}] \right] F^T =
	\frac{\DD}{\DD t} [\tau] - L \, \tau - \tau \, L^T,
\end{align}
where $F^{-1} \tau \, F^{-T} = S_2$ is the second Piola-Kirchhoff stress, $\frac{\DD}{\DD t}[\tau]$, with components
	\begin{align}
		\label{eq:substantial_derivative_tau}
		\left[ \frac{\DD}{\DD t} [\tau] \right]^{ij} = \tau^{ij}{}_{|k} \, v_k + \frac{\partial \tau^{ij}}{\partial t} \, ,
	\end{align}
is the standard material (substantial) derivative of $\tau$ (where $\tau^{ij}{}_{|k} = \partial \tau^{ij} / \partial x^k + \gamma^i_{pk} \, \tau^{pj} + \gamma^j_{qk} \, \tau^{iq}$ is the covariant derivative with Christoffel symbols $\gamma^i_{jk}$ of the Levi-Civita connection associated with the metric tensor $g$ \cite{Marsden1983a}) and $L = \dot{F} \, F^{-1}$ arises from the derivative
\begin{align}
	\frac{\DD}{\DD t} [F^{-1}] = - F^{-1} L.
\end{align}
Hughes and Marsden~\cite{Hughes1977a} and Marsden and Hughes~\cite{Marsden1983a} elegantly showed how the Zaremba-Jaumann rate, which in covariant formalism reads
\begin{align}
	\label{eq:Jaumann_rate_tau}
	\frac{\DD^{\ZJ}}{\DD t}[\tau] = \frac{\DD}{\DD t}[\tau] + \tau \, W \, g^{-1} - g^{-1} \, W \, \tau \, ,
\end{align}
is obtained as the linear combination of Lie derivatives
	\begin{align}
		\label{eq:Jaumann_rate_tau_w_Lie}
		\frac{\DD^{\ZJ}}{\DD t}[\tau] =
		\frac{1}{2} \Big[ [\mathcal{L}_v(\tau \, g)] \, g^{-1} + g^{-1} \, [\mathcal{L}_v(g \, \tau)] \Big],
	\end{align}
	where $g$ is the metric tensor, with covariant components $g_{ij}$, and $g^{-1}$ is its inverse, with contravariant components $g^{ij}$. To show the equivalence of~\eqref{eq:Jaumann_rate_tau} and~\eqref{eq:Jaumann_rate_tau_w_Lie}, we must temporarily switch to a covariant formalism, and consider that $\tau$ has contravariant components $\tau^{ij}$, so that $\tau \, g$ is the tensor with components $(\tau \, g)^i{}_k = \tau^{ij} \, g_{jk}$ and $g \, \tau$ is the tensor with components $(g \, \tau)_i{}^k = g_{ij} \, \tau^{jk}$ and, in components, Eq.~\eqref{eq:Jaumann_rate_tau_w_Lie} reads
	\begin{align}
		\label{eq:Jaumann_rate_tau_w_Lie_comp}
		\left[ \frac{\DD^{\ZJ}}{\DD t}[\tau] \right]^{il} =
		\frac{1}{2} \Big[ [\mathcal{L}_v(\tau \, g)]^i{}_k \, g^{kl} + g^{ik} \, [\mathcal{L}_v(g \, \tau)]_k{}^l \Big].
	\end{align}
	We also need to consider two other relations. First, in covariant formalism, the velocity gradient $L$ is decomposed into
	\begin{align}
		\label{eq:covariant_decomposition_velocity_gradient}
		L = g^{-1} D + g^{-1} W =
		g^{-1} \frac{1}{2}(g \, L + L^T g) + g^{-1} \frac{1}{2}(g \, L - L^T g)
	\end{align}
	so that the rate of deformation $D$ and the vorticity tensor $W$ both have covariant components. Second, the Lie derivative satisfies Leibniz' rule in a strict sense, i.e., for any two spatial tensors $\mathbbs{A}$ and $\mathbbs{B}$ of \emph{any} order, we have~\cite{Marsden1983a}
	\begin{align}
		\label{eq:Lie_derivative_Leibniz}
		\mathcal{L}_v(\mathbbs{A} \otimes \mathbbs{B}) = (\mathcal{L}_v\,\mathbbs{A}) \otimes \mathbbs{B} + \mathbbs{A} \otimes (\mathcal{L}_v\,\mathbbs{B}).
	\end{align}
	Since the Leibniz rule~\eqref{eq:Lie_derivative_Leibniz} is valid for the tensor product of $\mathbbs{A}$ and $\mathbbs{B}$, it is also valid for any contraction of $\mathbbs{A}$ and $\mathbbs{B}$. Therefore, since the Lie derivative of the metric $g$ is twice the rate of deformation $D$ \cite{Marsden1983a}, i.e.,
	\begin{align}
		\label{eq:Lie_derivative_metric}
		\mathcal{L}_v\,g =
		F^{-T} \left[ \frac{\DD}{\DD t} [F^T g \, F] \right] F^{-1} =
		F^{-T} \left[ F^T L^T g \, F + F^T g \, L \, F \right] F^{-1} =
		L^T g  + g \, L =
		2 \, D,
	\end{align}
	the Lie derivatives featuring in~\eqref{eq:Jaumann_rate_tau_w_Lie} can be written
	\begin{equation}
		\begin{alignedat}{2}
			[\mathcal{L}_v(\tau \, g)] \, g^{-1}
			& = [(\mathcal{L}_v\,\tau) \, g + \tau \, (\mathcal{L}_v\,g)] \, g^{-1}
			= \mathcal{L}_v\,\tau + \tau \, (2 \, D) \, g^{-1} \, ,
			\\
			g^{-1} \, [\mathcal{L}_v(g \, \tau)]
			& = g^{-1} \, [(\mathcal{L}_v\,g) \, \tau + g \, (\mathcal{L}_v\,\tau)]
			= g^{-1} \, (2 \, D) \, \tau + \mathcal{L}_v\,\tau.
		\end{alignedat}
	\end{equation}
	Substituting this result into~\eqref{eq:Jaumann_rate_tau_w_Lie}, we obtain
	\begin{align}
		\label{eq:Jaumann_rate_and_Lie_derivative_tau_covariant}
		\frac{\DD^{\ZJ}}{\DD t}[\tau] =
		\mathcal{L}_v\,\tau + \tau \, D \, g^{-1} + g^{-1} D \, \tau.
	\end{align}
	Now, we substitute the expression~\eqref{eq:Lie_derivative_tau} of the Lie derivative of $\tau$, which gives
	\begin{align}
		\label{eq:Jaumann_tau_and_Lie_derivative_tau}
		\frac{\DD^{\ZJ}}{\DD t}[\tau] =
		\frac{\DD}{\DD t} [\tau] - L \, \tau - \tau \, L^T + \tau \, D \, g^{-1} + g^{-1} D \, \tau
	\end{align}
	Substitution of the symmetric-skew decomposition~\eqref{eq:covariant_decomposition_velocity_gradient} of the velocity gradient $L$ into~\eqref{eq:Jaumann_tau_and_Lie_derivative_tau} and some simple manipulation yield the expression~\eqref{eq:Jaumann_rate_tau} of the Zaremba-Jaumann rate of $\tau$.

	The Cauchy stress $\sigma$ is a tensor of the same type as the Kirchhoff stress $\tau$ (with contravariant components $\sigma^{ij}$). Therefore, the Lie derivative of the Cauchy stress $\sigma$ has the same form seen in~\eqref{eq:Lie_derivative_tau} for the case of the Kirchhoff stress $\tau$, i.e.,
\begin{align}
	\label{eq:Lie_derivative_sigma}
	\mathcal{L}_v \sigma =
	F \left[ \frac{\DD}{\DD t} [F^{-1} \sigma \, F^{-T}] \right] F^T =
	\frac{\DD}{\DD t} [\sigma] - L \, \sigma - \sigma \, L^T \, .
\end{align}
Similarly, the Zaremba-Jaumann rate has expressions analogous to those seen in Eqs.~\eqref{eq:Jaumann_rate_tau} and~\eqref{eq:Jaumann_rate_and_Lie_derivative_tau_covariant} for $\tau$. In terms of the substantial derivative $(\DD/\DD t)[\sigma]$ and the spin tensor $W$, we have
\begin{align}
	\label{eq:Jaumann_rate_sigma}
	\frac{\DD^{\ZJ}}{\DD t}[\sigma] = \frac{\DD}{\DD t}[\sigma] + \sigma \, W \, g^{-1} - g^{-1} \, W \, \sigma \, ,
\end{align}
and, in terms of the Lie derivative $\mathcal{L}_v\,\sigma$ and the deformation rate $D$, we have
\begin{align}
	\label{eq:Jaumann_rate_and_Lie_derivative_sigma_covariant}
	\frac{\DD^{\ZJ}}{\DD t}[\sigma] =
	\mathcal{L}_v\,\sigma + \sigma \, D \, g^{-1} + g^{-1} D \, \sigma \, .
\end{align}
However, for the purpose of calculating the elasticity tensor related to the Zaremba-Jaumann rate~\eqref{eq:Jaumann_rate_sigma} of $\sigma$ it is more convenient to express it in terms of the \emph{Truesdell rate} rather than, as we had done with the Kirchhoff stress $\tau$, in terms of the Lie derivative. The Truesdell rate is, however, closely related to the Lie derivative \cite{Marsden1983a,Federico2022a}. Indeed, exploiting the Leibniz' rule~\eqref{eq:Lie_derivative_Leibniz} and the fact that the Lie derivative of a scalar is simply the substantial time derivative (here, we consider $J$ as an \emph{Eulerian} scalar field, i.e., a function of the spatial point $\xi$), we have
\begin{align}
	\label{eq:Truesdell_rate_sigma}
	\frac{\DD^{\TR}}{\DD t}[\sigma] =
	J^{-1} \mathcal{L}_v (J \, \sigma) =
	J^{-1} \left[ J \, (\mathcal{L}_v \sigma) + (\mathcal{L}_v J) \, \sigma) \right] = 
	\mathcal{L}_v \sigma + \tr(D) \, \sigma \, .
\end{align}
where we used $\dot{J} = J \, \mathrm{div}\,v = J \, \tr(D)$. Moreover, considering the fact that the Lie derivative is the push-forward of the time derivative of the pull-back, the expression~\eqref{eq:Truesdell_rate_sigma} implies that the Truesdell rate is \emph{the forward Piola transform of the time derivative of the backward Piola transform} \cite{Federico2022a}, i.e.,
\begin{align}
	\label{eq:Truesdell_rate_sigma_Piola_transforms}
	\frac{\DD^{\TR}}{\DD t}[\sigma] =
	J^{-1} \mathcal{L}_v (J \, \sigma) =
	J^{-1} F \left[ \frac{\DD}{\DD t} [J \, F^{-1} \sigma \, F^{-T}] \right] F^T =
	\frac{\DD}{\DD t} [\sigma] - L \, \sigma - \sigma \, L^T + \tr(D) \, \sigma \, ,
\end{align}
where $J \, F^{-1} \sigma \, F^{-T} = S_2$ is the second Piola-Kirchhoff stress. Now, solving for the Lie derivative $\mathcal{L}_v \sigma$ in~\eqref{eq:Truesdell_rate_sigma} and substituting in~\eqref{eq:Jaumann_rate_and_Lie_derivative_sigma_covariant}, we obtain the expression
\begin{align}
	\label{eq:Jaumann_rate_and_Truesdell_rate_sigma_covariant}
	\frac{\DD^{\ZJ}}{\DD t}[\sigma] =
	\frac{\DD^{\TR}}{\DD t}[\sigma] + \sigma \, D \, g^{-1} + g^{-1} D \, \sigma - \tr(D) \, \sigma \, .
\end{align}

	Reverting to Cartesian coordinates, in which the metric tensor $g$ and its inverse $g^{-1}$ behave like the identity tensor, the relation~\eqref{eq:Jaumann_rate_and_Lie_derivative_tau_covariant} between the Zaremba-Jaumann rate~\eqref{eq:Jaumann_rate_tau} of $\tau$ and the Lie derivative~\eqref{eq:Lie_derivative_tau} of $\tau$ reads
\begin{align}
	\label{eq:Jaumann_rate_and_Lie_derivative_tau}
	\frac{\DD^{\ZJ}}{\DD t}[\tau] = \mathcal{L}_v \tau + \tau \, D + D \, \tau \, ,
\end{align}
and the relation~\eqref{eq:Jaumann_rate_and_Truesdell_rate_sigma_covariant} between the Zaremba-Jaumann rate~\eqref{eq:Jaumann_rate_sigma} of $\sigma$ and the Truesdell rate~\eqref{eq:Truesdell_rate_sigma} of $\sigma$ reads
\begin{align}
	\label{eq:Jaumann_rate_and_Truesdell_rate_sigma}
	\frac{\DD^{\ZJ}}{\DD t}[\sigma] = \frac{\DD^{\TR}}{\DD t}[\sigma] + \sigma \, D + D \, \sigma \, - \tr(D) \, \sigma.
\end{align}

\end{appendix}

\end{document}